\definecolor{foge}{rgb}{0.1, 0.6, 0.1}
\numberwithin{equation}{section}
\newtheorem{theo}{Theorem}[section]
\newtheorem{lem}[theo]{Lemma}
\newtheorem{cor}[theo]{Corollary}
\newtheorem{rem}[theo]{Remark}
\newtheorem{ex}[theo]{Example}
\theoremstyle{definition} 
\newtheorem{deff}[theo]{Definition}
\newcommand{\la}{\lambda}
\newcommand{\ep}{\epsilon}
\newcommand{\Pp}{\mathcal{P}}
\newcommand{\Ppp}{\mathbb{P}}
\newcommand{\Dd}{\mathcal{D}}
\newcommand{\F}{\mathcal{F}}
\newcommand{\E}{\mathcal{E}}
\newcommand{\C}{\mathcal{C}}
\newcommand{\R}{\mathcal{R}}
\newcommand{\Zo}{\mathbb{Z}_{\geq 0}}
\newcommand{\Zu}{\mathbb{Z}_{\geq 1}}
\title{Systematic study of Schmidt-type partitions via weighted words}
\author{Isaac Konan}
\address{Universit\'e de Lyon, Universit\'e Claude Bernard Lyon 1, UMR5208, Institut Camille Jordan, F-69622 Villeurbanne, France}
\email{konan@math.univ-lyon1.fr}
\thanks{This work was supported by the LABEX MILYON (ANR-10-LABX-0070) of Universit\'e de Lyon, within the program ''Investissements d'Avenir" (ANR-11-IDEX-0007) operated by the French National Research Agency (ANR).}
\keywords{Integer partitions, Schmidt-type partitions}
\begin{document}
      
\begin{abstract}
Let $S=(s_n)_{n\geq 1}$ be a sequence with elements in a commutative monoid $(\mathcal{M},+,0)$. In this paper, we provide an explicit formula for
$$\sum_{\la} C(\la) q^{\sum_{n\geq 1} \la_n\cdot s_n}$$
where $\la=(\la_1,\ldots)$ run through some subsets of over-partitions, and $C(\la)$ is a certain product of ``colors'' assigned to the parts of $\la$, and $q^s$ is a formal power of $q$ for $s\in M$. This formula allows us not only to retrieve several known Schmidt-type theorems but also to provide new Schmidt-type theorems for non-periodic sequences $S$. For example, when $(M,+,0)=(\mathbb{Z}_{\geq 0},+,0)$, $s_n=1$ if there exists $i\geq 1$ such  $n=\{i(i-1)/2+1\}$ and $s_n=0$ otherwise, we obtain the following statement: for all non-negative integer $m$, the number of partitions such that $\sum_{i\geq 1}\la_{i(i-1)/2+1} =m$ is equal to the number of plane partitions of $m$. 
Furthermore, we introduce a new family of partitions, the block partitions, generalizing the $k$-elongated partitions. From that family of partitions, we provide a generalization of a Schmidt-type theorem due to Andrews and Paule regarding $k$-elongated partitions and establish a link with the Eulerian polynomials.
\end{abstract}


\maketitle


\section{Introduction}

\subsection{History}

An integer partition is a finite non-increasing sequence of positive integers, i.e $\la = (\la_1,\ldots,\la_s)$ with $\la_1\geq \cdots \geq \la_s >0$. The terms $\la_i$ are called parts of $\la$. We denote by $|\la|=\la_1+\la_2+\cdots+\la_s$ the weight of $\la$. For a non-negative $n$, a partition of $n$ is a partition of weight $n$. For example, the partition of $5$ are $$(5),(4,1),(3,2),(3,1,1),(2,2,1),(2,1,1,1)\text{ and }(1,1,1,1,1).$$ In a 1999 paper \cite{Sc99}, Schmidt proposed the following identity.

\begin{theo}[Schmidt]\label{theo:schmidt}
Let $n$ be a non-negative integer. Then, the number of partitions into distinct parts such that $\la_1+\la_3+\la_5+\cdots=n$ is equal to the number of partitions of $n$. 
\end{theo}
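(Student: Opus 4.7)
The plan is to compute the generating function
$$F(q):=\sum_{\la\text{ distinct}}q^{\la_1+\la_3+\la_5+\cdots}$$
directly and recognize it as $\prod_{k\geq 1}(1-q^k)^{-1}$, the classical generating function for ordinary partitions. To do this, I would group distinct partitions $\la_1>\la_2>\cdots>\la_s>0$ according to their odd-indexed subsequence $\alpha_i:=\la_{2i-1}$ for $i=1,\ldots,r$ with $r=\lceil s/2\rceil$. The strict inequalities force $\alpha_i-\alpha_{i+1}\geq 2$ for $i<r$ and $\alpha_r\geq 1$; conversely, given such $(\alpha_1,\ldots,\alpha_r)$ I count the number of distinct partitions $\la$ producing it. Each inner even-indexed part $\la_{2i}$ with $i<r$ must lie in $(\alpha_{i+1},\alpha_i)$, giving $\alpha_i-\alpha_{i+1}-1$ choices; for the last position we either include $\la_{2r}\in(0,\alpha_r)$ (the case $s=2r$, with $\alpha_r-1$ choices) or omit it (the case $s=2r-1$, one extra choice). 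A short case check shows that the two options combine to yield exactly $\alpha_r$ choices, whether $\alpha_r=1$ or $\alpha_r\geq 2$, so the number of $\la$ producing $(\alpha_1,\ldots,\alpha_r)$ is $\alpha_r\prod_{i=1}^{r-1}(\alpha_i-\alpha_{i+1}-1)$.

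Next, substituting $\gamma_i:=\alpha_i-\alpha_{i+1}-1$ for $i<r$ and $\gamma_r:=\alpha_r$ (all in $\Zu$), I find $\alpha_i=(r-i)+\sum_{j=i}^{r}\gamma_j$, and hence
$$\sum_{i=1}^{r}\alpha_i=\binom{r}{2}+\sum_{i=1}^{r}i\gamma_i,\qquad \alpha_r\prod_{i=1}^{r-1}(\alpha_i-\alpha_{i+1}-1)=\prod_{i=1}^{r}\gamma_i.$$
The generating function therefore factors over the $\gamma_i$:
$$F(q)=\sum_{r\geq 0}q^{\binom{r}{2}}\prod_{i=1}^{r}\sum_{\gamma\geq 1}\gamma\, q^{i\gamma}=\sum_{r\geq 0}q^{\binom{r}{2}}\prod_{i=1}^{r}\frac{q^{i}}{(1-q^i)^{2}}=\sum_{r\geq 0}\frac{q^{r^2}}{(q;q)_r^{2}},$$
using $\binom{r}{2}+\sum_{i=1}^{r}i=r^2$.

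Finally I invoke the classical Durfee-square identity
$$\sum_{r\geq 0}\frac{q^{r^2}}{(q;q)_r^{2}}=\prod_{k\geq 1}\frac{1}{1-q^{k}},$$
which comes from decomposing an ordinary partition as an $r\times r$ Durfee square with a partition of height at most $r$ to its right and a partition of width at most $r$ below. Comparing with the formula for $F(q)$ proves the theorem.

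The main obstacle I anticipate is the uniform case merging that produces the clean factor $\alpha_r$ in the weighted count (merging the two sub-cases $s=2r-1$ and $s=2r$ into a single formula valid for $\alpha_r\geq 1$); once that step is handled, the remainder is routine $q$-series manipulation followed by a direct appeal to a well-known identity.
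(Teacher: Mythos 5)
Your proof is correct, but it follows a genuinely different route from the paper. The paper obtains Schmidt-type theorems by conjugating the (over-)partition and reading off the resulting column lengths through the $S$-greatest-integer function: the bijection $\Phi_S=\Psi_S\circ(\la\mapsto\tilde{\la})$ of Theorem \ref{theo:main} converts the $S$-weight into an ordinary weight of a colored partition, and Schmidt's original statement drops out of the specialization $S=\{1,3,5,\ldots\}$ of the resulting product/sum formulas. You instead work directly on the sum side: you slice a distinct partition along its odd-indexed parts $\alpha_i=\la_{2i-1}$, count the interlacing choices for the even-indexed parts (the merging of the cases $s=2r-1$ and $s=2r$ into the single factor $\alpha_r$ is handled correctly, and the independence of the choices of the $\la_{2i}$ is clear since each is constrained only by the fixed $\alpha_i,\alpha_{i+1}$), and after the substitution $\gamma_i=\alpha_i-\alpha_{i+1}-1$, $\gamma_r=\alpha_r$ you arrive at $\sum_{r\geq 0}q^{r^2}/(q;q)_r^2$, which the classical Durfee-square identity evaluates to $1/(q;q)_\infty$. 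Both arguments are complete; yours is elementary and self-contained but tailored to this single case (it is, in effect, a generating-function shadow of the Durfee-square bijection), whereas the paper's conjugation machinery buys uniformity: it handles arbitrary index sets $S$, keeps track of color refinements, and yields Theorems \ref{theo:ap1}, \ref{theo:ap2} and the non-periodic examples from the same computation. It is worth noting that even within the paper's framework one still needs a Durfee-type summation to pass from the sum in \eqref{eq:ovd} to the infinite product, so your appeal to that identity is not a detour so much as a different placement of the same combinatorial core.
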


The point of this paper is to investigate ``Schmidt-type'' partitions of $a$,  which, in the idea of Schmidt's identity, are defined as integer partitions $(\la_1,\ldots)$ satisfying $$\sum_{ i\geq 1} \la_i\cdot s_i= a$$
for a certain sequence $S=(s_i)_{i\geq1}$ with elements in a commutative monoid $(\mathcal{M},+,0)$ and an element $a$ in $\mathcal{M}$. Recall that a commutative monoid $(\mathcal{M},+,0)$ is a set containing a neutral element $0$ and a binary operation $+$ such that:
\begin{itemize}
\item[\rm 1)] for all $a,b\in \mathcal{M}$, $b+a=a+b\in \mathcal{M}$,
\item[\rm 1)] for all $a,b,c\in \mathcal{M}$, $(a+b)+c=a+(b+c)$,
\item[\rm 3)] for all $a\in \mathcal{M}$, $a+0=0$,
\end{itemize}
and for all $\ell\in \Zo$ and all $a\in \mathcal{M}$, the element $$\underbrace{a+\cdot+a}_{\ell \text{ times}}$$ is referred to as $\ell \cdot a$.
Following the steps of Schmidt, Uncu, and Andrews--Paule independently provided an analogous theorem for the unrestricted partitions \cite{U18,AP22}.

\begin{theo}[Uncu/Andrews--Paule]\label{theo:ap1}
Let $n$ be a non-negative integer. Then, the number of partitions such that $\la_1+\la_3+\la_5+\cdots=n$ is equal to the number of partitions of $n$ where positive integers appear in two colors. 
\end{theo}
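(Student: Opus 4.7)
The plan is to prove the Andrews--Paule identity by a direct bijective/generating-function argument based on a change of variables that turns the Schmidt sum into a standard partition sum.

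First, I would set up the parametrization. Let $\la=(\la_1,\la_2,\ldots)$ be an arbitrary partition, padded by zeros. Writing $a_i=\la_{2i-1}$ and $b_i=\la_{2i}$, the weak-decrease condition becomes $a_1\geq b_1\geq a_2\geq b_2\geq \cdots$, which I encode by the non-negative gap variables
$$x_i:=a_i-b_i\geq 0,\qquad y_i:=b_i-a_{i+1}\geq 0,\qquad i\geq 1,$$
all but finitely many being zero. This is a bijection between partitions and finitely supported sequences $(x_i,y_i)_{i\geq 1}\in\mathbb{Z}_{\geq 0}^{\,\infty}$.

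Next, I would compute the Schmidt statistic in these new coordinates. From $a_i=a_{i+1}+x_i+y_i$ one gets $a_i=\sum_{j\geq i}(x_j+y_j)$, hence
$$\la_1+\la_3+\la_5+\cdots \;=\;\sum_{i\geq 1} a_i \;=\; \sum_{i\geq 1}\sum_{j\geq i}(x_j+y_j) \;=\; \sum_{j\geq 1} j\,(x_j+y_j).$$
Summing over all sequences then gives
$$\sum_{\la} q^{\la_1+\la_3+\cdots} \;=\; \prod_{j\geq 1} \frac{1}{(1-q^j)^2},$$
which is manifestly the generating function for partitions in which each positive integer is allowed to appear in two colors. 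Extracting the coefficient of $q^n$ yields the claimed equality of counts.

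The conceptual core is the identification of the correct ``difference coordinates'' so that the Schmidt sum becomes the weighted-sum statistic $\sum j(x_j+y_j)$; once this is in place, the rest is bookkeeping. I expect the only subtle point to be the careful handling of the tail (ensuring that the sequence $(x_i,y_i)$ is eventually zero iff $\la$ has finitely many parts, and that the two resulting ``colors'' at weight $j$ correspond respectively to gaps of the form $a_i-b_i$ and $b_i-a_{i+1}$), but this is routine. A bijective rephrasing reading off two partitions $\mu^{(x)}$ and $\mu^{(y)}$ from the multisets $\{j^{x_j}\}$ and $\{j^{y_j}\}$ makes the statement combinatorially transparent and recovers \Thm{theo:ap1} directly.
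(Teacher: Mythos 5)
Your argument is correct, and it is in substance the same as the paper's: the gap variables $x_j=\lambda_{2j-1}-\lambda_{2j}$ and $y_j=\lambda_{2j}-\lambda_{2j+1}$ are exactly the multiplicities of $2j-1$ and $2j$ as parts of the conjugate $\tilde{\lambda}$, and your identity $\sum_i \lambda_{2i-1}=\sum_j j\,(x_j+y_j)$ is precisely the specialization to $S=\{1,3,5,\ldots\}$ of the key computation $|\lambda|_S=\sum_j \lfloor\tilde{\lambda}_j\rfloor_S$ underlying the bijection $\Phi_S$ of Theorem \ref{theo:main}, from which the paper deduces Theorem \ref{theo:ap1} via \eqref{eq:p}. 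The only difference is presentational: you work directly with difference coordinates and suppress the colors, whereas the paper routes the statement through its general weighted-words machinery for arbitrary $S$.
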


The investigation in the work of Andrews and Paule led to another Schmidt-type theorem related to $k$-elongated partitions, where a link to over-partitions arose. An over-partition is a partition such that positive integers can occur over-lined once. We here adopt the notation where the over-lined occurrence is the last one. For example, the over-partitions of $2$ are $(2),(\overline{2}),(1,1)$ and $(1,\overline{1})$. For a positive integer $k$, a $k$-elongated partition is a finite sequence $(\la_1,\ldots)$ of positive integers such that
$$
\la_1
\geq 
\begin{array}{c}
\la_{2}\\
\la_{3}
\end{array}
\geq \cdots \geq 
\begin{array}{c}
\la_{2k}\\
\la_{2k+1}
\end{array}
\geq 
\la_{2k+2}
\geq 
\begin{array}{c}
\la_{2k+3}\\
\la_{2k+4}
\end{array}
\geq \cdots \geq 
\begin{array}{c}
\la_{4k+1}\\
\la_{4k+2}
\end{array}
\geq 
\la_{4k+3}\geq \cdots .
$$  

The result of Andrews and Paule is then stated as follows.

\begin{theo}[Andrews--Paule]\label{theo:ap2}
Let $n$ be a non-negative integer. Then, the number of $k$-elongated partitions such that $\la_1+\la_{2k+2}+\la_{4k+3}+\cdots=n$ is equal to the number of over-partitions of $n$, where non-over-lined positive integers can occur into $2k+1$ colors, and over-lined integers can occur into $k$ colors. 
\end{theo}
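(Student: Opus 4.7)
The plan is to deduce the theorem from the paper's main weighted-word generating-function formula (announced in the abstract), applied with the index set $S = \{(2k+1)j+1 : j \geq 0\}$, and to recognize the resulting infinite product as the generating function of the claimed colored over-partitions.

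First, I would decompose a $k$-elongated partition $\lambda$ into blocks: the $j$-th block consists of the singleton $s_j := \lambda_{(2k+1)j+1}$ together with the $k$ pairs $(\lambda_{(2k+1)j+2i}, \lambda_{(2k+1)j+2i+1})$ for $i = 1, \ldots, k$. The defining chain of inequalities forces $s_j \geq s_{j+1}$ and confines all pair entries of the $j$-th block to the interval $[s_{j+1}, s_j]$, with successive pairs decreasing in the prescribed sense. Only the singletons contribute to the exponent tracked on the left-hand side, so the pair entries should be interpreted as combinatorial "colors" weighting each singleton.

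Next, I would choose a color alphabet matching the block structure: the $2k$ entries of the $k$ pairs together with the singleton account for the $2k+1$ non-overlined colors available at each part size, while a binary invariant of each pair (for instance whether its two entries coincide) accounts for the $k$ overlined colors. Feeding this color data together with $S = \{(2k+1)j+1 : j \geq 0\}$ into the paper's main formula should yield the generating function
$$\prod_{n \geq 1} \frac{(1+q^n)^k}{(1-q^n)^{2k+1}},$$
which is exactly the generating function of over-partitions with $2k+1$ non-overlined colors and $k$ overlined colors per part size.

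The main obstacle is to ensure that the color assignment is a bijection: every $k$-elongated partition must correspond to exactly one weighted over-partition, and vice versa, in a way that sends $\sum_j s_j$ to the weight of the over-partition. The delicate point is the interaction of the block-level constraints (componentwise ordering between successive pairs and their bracketing singletons) with the global singleton ordering $s_j \geq s_{j+1}$, and verifying that the count of pair configurations inside a block of "height" $d_j = s_j - s_{j+1}$ assembles, when summed over all blocks, into the claimed product. Once the encoding is set up and the configuration count is checked, the rest of the proof is a direct application of the paper's main formula to the index set $S$.
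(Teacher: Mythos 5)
Your plan correctly identifies the index set $S=\{(2k+1)j+1: j\geq 0\}$, the block decomposition into a singleton followed by $k$ pairs, and the target product $\prod_{n\geq 1}(1+q^n)^k/(1-q^n)^{2k+1}$. But the step you defer as ``the main obstacle'' is precisely where all the work lies, and the tool you propose to invoke cannot do it. The paper's main formula (Theorem \ref{theo:main}) is a statement about \emph{over-partitions}, i.e.\ globally non-increasing sequences: its proof is conjugation of the Ferrers diagram followed by a recoloring of columns. A $k$-elongated partition is not non-increasing --- the two entries of a pair are unordered relative to each other --- so you cannot ``feed'' it into that formula with any choice of color alphabet. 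This is exactly why the paper develops a separate apparatus (Section \ref{sec:compsympu}): each block is sorted by a permutation $\sigma$ recorded via the map $\Gamma_u$, the sorted sequence must have a strict descent wherever $\sigma$ does ($\sigma$-admissibility), conjugation turns this into the condition that certain part sizes must occur ($\Omega_U$), and summing over the permutations of each block produces the polynomials $\overline{\E}_{u}$, $\E_u$; for pairs one gets $\E_2(y)=1+y$, which is the source of the factors $(1+q^n)$. Theorem \ref{theo:ap2} is then read off from the $n=2$ specialization of Theorem \ref{theo:pus}. None of this is supplied or replaced by your sketch.

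A second, concrete problem is your proposed binary invariant for the $k$ over-lined colors, namely ``whether the two entries of a pair coincide.'' The dichotomy that actually works is whether the pair is weakly decreasing or strictly increasing: a pair $(a,b)$ with both entries in $[s_{j+1},s_j]$ is sorted either by the identity (when $a\geq b$, no constraint) or by the transposition (when $a<b$, forcing a strict descent in the sorted block), and it is this descent statistic that survives conjugation and assembles, size by size, into the factor $(1+q^n)$ per pair. Splitting the $(d+1)^2$ configurations of a pair over a gap of size $d$ as $(d+1)+d(d+1)$ (coincide / differ) does not decompose multiplicatively over part sizes $n$, whereas the split $\binom{d+2}{2}+\binom{d+1}{2}$ (weakly decreasing / strictly increasing) is exactly what matches $\frac{1+q^n}{(1-q^n)^2}$. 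So even granting the block framework, the encoding you describe would not close; you would need to redo it with the descent invariant and then prove the assembly statement, which is the content of Theorems \ref{theo:pu} and \ref{theo:pus}.
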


In \cite{BU22}, Brigdes and Uncu proved a refinement of the above theorem for $k=1$ which corresponds to the diamond partitions.
In the aftermath of Andrews and Paule's work, several recent studies related to Schmidt-type theorems were led \cite{J22,LY22,W22}. We here cite an example of such work due to Andrews and Keith. In \cite{AK22}, they explore Schmidt-type partitions where positive integers occur less than $m$ times. In doing so, they provide an identity that generalizes the original Schmidt identity and refines the Glaisher identity \cite{G83}.

\begin{theo}[Andrews--Keith]\label{theo:ak}
Fix $m > 2$.  Let $R = \{ r_1, r_2, \dots, r_i \} \subseteq \{1, 2, \dots, m-1 \}$ with $1 \in R$, and $\vec{\rho} = (\rho_1, \dots, \rho_{m-1})$. Denote by $P_{m,R}(n;\vec{\rho})$ the number of partitions $\lambda = (\lambda_1, \dots, )$ into parts repeating less than $m$ times in which \begin{align*} n &= \sum_{c \equiv r_j \pmod{m}} \lambda_c \\ \rho_k &= \sum_{c \equiv k \pmod{m}} \lambda_k - \lambda_{k+1}.\end{align*}  Then $P_{m,R}(n;\vec{\rho})$ is also equal to the number of partitions of $n$ where parts $k \pmod{i}$ appear in $r_{k+1} - r_k$ colors, letting $r_{i+1} = m$, and, labeling colors of parts $k \pmod{i}$ by $r_k$ through $r_{k+1}-1$, parts of color $j$ appear $\rho_j$ times.
\end{theo}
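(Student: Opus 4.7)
The plan is to prove \Thm{theo:ak} by matching generating series on both sides, using the successive-difference encoding of partitions to linearize both the Schmidt weight and the vector $\vec{\rho}$.

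Given a partition $\lambda = (\lambda_1, \ldots, \lambda_s)$ with parts repeating fewer than $m$ times, I would set $d_c = \lambda_c - \lambda_{c+1}$ (with $\lambda_{s+1} = 0$). The $d_c$'s are non-negative integers, and the repetition condition translates to a window constraint: whenever $\lambda_{c+m-1} > 0$, not all of $d_c, d_{c+1}, \ldots, d_{c+m-2}$ vanish. Using $\lambda_c = \sum_{j \geq c} d_j$, one rewrites
$$
n = \sum_{j \geq 1} w(j) \, d_j,
\qquad
\rho_k = \sum_{c \equiv k \pmod{m}} d_c,
$$
where $w(j) = i \lfloor j/m \rfloor + \sigma(j \bmod m)$ and $\sigma(r) = |\{\ell : s_\ell \leq r\}|$ (with $\sigma(0) = 0$). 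On the right-hand side, parts of color $j$ lie in the arithmetic progression $\{\sigma(j) + ri : r \geq 0\}$; summing over the $\rho_j$ via the $q$-binomial theorem yields the generating series
$$
G(q; \vec{x}) = \prod_{j=1}^{m-1} \prod_{r \geq 0} \frac{1}{1 - x_j q^{\sigma(j) + ir}}.
$$

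If one treats the $d_c$'s as independent non-negative integers (temporarily ignoring the window constraint), each position $c$ contributes a factor $1/(1 - x_{c \bmod m} q^{w(c)})$ with the convention $x_0 := 1$. Organizing by $c \bmod m$ and $\lfloor c/m \rfloor$ gives the naive series
$$
F_{\mathrm{naive}}(q; \vec{x}) = \Bigl( \prod_{r \geq 1} \frac{1}{1 - q^{ir}} \Bigr) \cdot G(q; \vec{x}),
$$
the extra factor coming from positions $c \equiv 0 \pmod{m}$, which contribute $w(rm) = ir$ to $n$ but produce no $\vec{\rho}$-contribution. The theorem therefore reduces to showing that imposing the window constraint on the $d_c$'s exactly kills the extra factor $\prod_{r \geq 1} 1/(1 - q^{ir})$.

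The main obstacle is this last step, since the window constraint is non-local, coupling $m-1$ consecutive $d_c$'s. I would attack it via a Glaisher-type bijection, viewing each $\lambda$ through its block decomposition $(v_1^{t_1}, \ldots, v_T^{t_T})$ with $t_j \in \{1, \ldots, m-1\}$: one would assign each value-change $v_j \to v_{j+1}$ a colored part on the right-hand side, absorbing the ``invisible'' contributions $d_{rm}$ into the values of adjacent seeds. Verifying that the resulting map is a well-defined bijection preserving both $n$ and $\vec{\rho}$ requires a careful case analysis tracking how the block lengths $t_j$ interact with the residues modulo $m$ spanned by each block, and this is the combinatorial heart of the proof.
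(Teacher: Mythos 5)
Your reduction is sound and in substance coincides with the paper's own route: the difference encoding $d_c=\lambda_c-\lambda_{c+1}$ together with $n=\sum_j w(j)\,d_j$, $w(j)=\lfloor j\rfloor_S$, is exactly the conjugation step behind \Thm{theo:main} and \Thm{theo:unrestrictedmodm}, your naive series is \eqref{eq:pmodm} specialized at $c_m=1$, and the claim that restricting to parts repeating fewer than $m$ times must cancel the single factor $\prod_{r\geq 1}(1-q^{ir})^{-1}$ attached to the color $c_m$ is precisely the content of \eqref{eq:restrictedpmodm}. The problem is that you stop at the one step that carries all the difficulty. The window constraint (no $m-1$ consecutive vanishing $d_c$'s above the bottom of the partition) is non-local, and the assertion that it ``exactly kills'' the extra factor while preserving the full multivariate statistic $\vec{\rho}$ cannot be read off from the naive factorization: the constraint couples the tracked variables $x_1,\dots,x_{m-1}$ with the untracked positions $c\equiv 0\pmod m$, so one genuinely needs a weight- and color-preserving bijection (or an identity of $q$-series refined by all the $x_j$), and you only gesture at one.

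The paper supplies this missing piece by conjugating ($\la\in\Pp_m$ corresponds to partitions with successive gaps at most $m-1$, Lemma \ref{lem:conjugatemodm}) and then invoking the flat-versus-regular correspondence of \Thm{theo:konan}, the generalized Glaisher theorem imported from \cite{K22}, whose Corollary \ref{cor:konan} states exactly the cancellation you need: colored partitions with bounded gaps are equinumerous, at fixed weight and color sequence, with colored partitions avoiding the color $c_m$. Your proposed ``block decomposition'' bijection is a plausible shape for such an argument, but as written it is a plan rather than a proof: the map is not defined, it is not verified to land in the target set, and the preservation of both $n$ and $\vec{\rho}$ is not checked. Until that bijection (or an equivalent analytic identity in all the variables $x_j$) is actually constructed and verified, the argument is incomplete precisely at its combinatorial heart.
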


So far in the literature, the Schmidt-type theorems focus on the sums of parts at indices in a set with periodic gaps, i.e. $S=(s_i)_{i\geq 1}$ periodic with $s_i\in \{0,1\}$. Hence, the goal of this paper is to build a machinery that allows systematic studies of Schmidt-type partitions whatsoever $S$. A further investigation leads to a Schmidt-type theorem on a new family of combinatorial objects, called ``block partitions'', which generalizes the result Andrews--Paule on $k$-elongated partitions.

\subsection{Statement of results}

Let $\la=(\la_i)_{i\geq 1}$ be an infinite sequence of non-negative integers with finitely many positive terms, and where the last occurrence of any positive integers can be over-lined. The terms $\la_i$ are called parts of $\la$ and belong to $\Zo\sqcup \overline{\Zu}$. Furthermore, the over-lined parts are all distinct. Denote by $\Ppp$ the set of such sequences and define the following order on $\Zo\sqcup \overline{\Zu}$:
$$0<\overline{1}<1<\overline{2}<2<\cdots.$$
We also set 
$$\overline{i}\pm \overline{j}=\overline{i}\pm j= i\pm \overline{j}=i\pm j \in \mathbb{Z}.$$ 
This means that the sum (resp. difference) of parts is seen as the sum (resp. difference) of the parts' sizes.
\begin{rem}
For all $u,v\in \Zo\sqcup \overline{\Zu}$ the condition $u-v>0$ is different from the condition $u>v$ unless $u,v$ are both either over-lined or non-over-lined. This comes from the fact that $i-\overline{i}=0$ but $i>\overline{i}$.
\end{rem}
The set of over-partitions $\overline{\Pp}$ consists of non-increasing sequences of $\Ppp$, the set of partitions $\Pp$  consists of over-partitions with non-over-lined parts, and the set $\overline{\Dd}$ consists of over-partitions such that positive parts are over-lined. Finally, let
$\overline{\F}$ be the set of over-partitions $\la$ such that $\overline{i}\in \la$ for all $\overline{i}\leq \la_1$, or equivalently, $\la_i-\la_{i+1}=\chi(\la_i \text{ is over-lined})$.
For all $\la\in \Ppp$, the conjugate of $\la$ is the sequence $\tilde{\la}=(\tilde{\la}_i)_{i\geq 1}$ such that, for all $i\geq 1$, the part $\tilde{\la}_i$ has size
$$\sharp\{j\geq 1: \la_j\geq \overline{i}\}$$
and is over-lined if and only if $\overline{i}$ is a part of $\la$.
The Ferrer diagram of $\la\in \Ppp$ consists of a graphic representation of $\la$ as a succession of rectangles, the $i^{th}$ rectangle being of length $\la_i$ with a marked corner if and only if $\la_i$ is over-lined.

\begin{ex} The sequence $(2,4,\overline{4},\overline{2},3,\overline{1},0,\ldots)$ belongs to $\Ppp$ but not $\overline{\Pp}$, while 
$(4,\overline{4},\overline{3},2,\overline{2},\overline{1},0,\ldots)$ is an over-partition in $\overline{\F}$ but not in $\overline{\Dd}$. Finally, $(4,4,3,2,2,1,0,\ldots)$ is a partition, and $(\overline{4},\overline{2},\overline{1},0,\ldots)$ is an over-partition in $\overline{\Dd}$ and not in $\overline{\F}$. 
The conjugates of $(2,4,\overline{4},\overline{2},3,\overline{1},0,\ldots)$ and $(4,\overline{4},\overline{3},2,\overline{2},\overline{1},0,\ldots)$ are both equal to $(\overline{6},\overline{5},\overline{3},\overline{2},0,\ldots)$.
\end{ex}

\begin{rem}
We retrieve the original notion of over-partition by deleting  the terms equal to $0$ in the aforementioned over-partitions. Inversely, an over-partition as an infinite sequence can be obtained by adding an infinite sequence of parts equal to $0$ to a finite over-partition. The same goes for the partitions.
\end{rem}

\begin{figure}

\begin{center}

\begin{tikzpicture}[scale=0.5, every node/.style={scale=0.7}]

\draw (-5,-3) node {\begin{large}$\tilde{\la}=\tilde{\mu}$\end{large}};


\draw (0,0)--(6,0)--(6,-1)--(5,-1)--(5,-3)--(4,-3)--(4,-4)--(3,-4)--(3,-6)--(1,-6)--(1,-7)--(0,-7)--cycle;

\draw (0,-1)--(5,-1);\draw[<-, dotted] (-0.5,-0.5)--(5.5,-0.5);
\draw (0,-2)--(5,-2);\draw[<-, dotted] (-0.5,-1.5)--(4.5,-1.5);
\draw (0,-3)--(4,-3);\draw[<-, dotted] (-0.5,-2.5)--(4.5,-2.5);
\draw (0,-4)--(3,-4);\draw[<-, dotted] (-0.5,-3.5)--(3.5,-3.5);
\draw (0,-5)--(3,-5);\draw[<-, dotted] (-0.5,-4.5)--(2.5,-4.5);
\draw (0,-6)--(1,-6);\draw[<-, dotted] (-0.5,-5.5)--(2.5,-5.5);
\draw[->] (0,-7)--(0,-9);\draw[<-, dotted] (-0.5,-6.5)--(0.5,-6.5);
\draw[<-, dotted] (-0.5,-7.5)--(0,-7.5);
\draw[<-, dotted] (-0.5,-8.5)--(0,-8.5);

\foreach \x in {1,...,9}
\draw (-1,0.5-\x) node {$\la_{\x}$};

\draw (1,0)--(1,-6);\draw[<-, dotted] (0.5,0.5)--(0.5,-6.5);
\draw (2,0)--(2,-6);\draw[<-, dotted] (1.5,0.5)--(1.5,-5.5);
\draw (3,0)--(3,-4);\draw[<-, dotted] (2.5,0.5)--(2.5,-5.5);
\draw (4,0)--(4,-3);\draw[<-, dotted] (3.5,0.5)--(3.5,-3.5);
\draw (5,0)--(5,-1);\draw[<-, dotted] (4.5,0.5)--(4.5,-2.5);
\draw[->] (6,0)--(8,0);\draw[<-, dotted] (5.5,0.5)--(5.5,-0.5);
\draw[<-, dotted] (6.5,0.5)--(6.5,0);
\draw[<-, dotted] (7.5,0.5)--(7.5,0);

\foreach \x in {1,...,8}
\draw (-0.5+\x,1) node {$\tilde{\la}_{\x}$};

\draw[fill=black] (5,-2.5)--(5,-3)--(4.5,-3)--cycle;
\draw[fill=black] (4,-3.5)--(4,-4)--(3.5,-4)--cycle;
\draw[fill=black] (1,-6.5)--(1,-7)--(0.5,-7)--cycle;


\draw (12,0)--(16,0)--(16,-1)--(17,-1)--(17,-2)--(18,-2)--(18,-3)--(15,-3)--(15,-4)--(12,-4)--(12,-5)--(17,-5)--(17,-6)--(13,-6)--(13,-7)--(15,-7)--(15,-8)--(12,-8)--cycle;

\draw (12,-1)--(16,-1);\draw[<-, dotted] (11.5,-0.5)--(15.5,-0.5);
\draw (12,-2)--(17,-2);\draw[<-, dotted] (11.5,-1.5)--(16.5,-1.5);
\draw (12,-3)--(15,-3);\draw[<-, dotted] (11.5,-2.5)--(17.5,-2.5);
\draw[<-, dotted] (11.5,-3.5)--(14.5,-3.5);
\draw (12,-6)--(13,-6);\draw[<-, dotted] (11.5,-5.5)--(16.5,-5.5);
\draw (12,-7)--(13,-7);\draw[<-, dotted] (11.5,-6.5)--(12.5,-6.5);
\draw[<-, dotted] (11.5,-7.5)--(14.5,-7.5);
\draw[->] (12,-8)--(12,-9);

\foreach \x in {1,...,9}
\draw (11,0.5-\x) node {$\mu_{\x}$};

\draw[->] (16,0)--(19,0);

\foreach \x in {1,...,7}
\draw[<-, dotted] (11.5+\x,0.5)--(11.5+\x,0);
\foreach \x in {1,...,7}
\draw (11.5+\x,1) node {$\tilde{\mu}_{\x}$};

\draw (13,0)--(13,-4); \draw (13,-5)--(13,-6);\draw (13,-7)--(13,-8);
\draw (14,0)--(14,-4); \draw (14,-5)--(14,-6);\draw (14,-7)--(14,-8);
\draw (15,0)--(15,-3); \draw (15,-5)--(15,-6);
\draw (16,-1)--(16,-3); \draw (16,-5)--(16,-6);
\draw (17,-2)--(17,-3);

\draw[fill=black] (16,-0.5)--(16,-1)--(15.5,-1)--cycle;
\draw[fill=black] (17,-5.5)--(17,-6)--(16.5,-6)--cycle;
\draw[fill=black] (13,-6.5)--(13,-7)--(12.5,-7)--cycle;

\end{tikzpicture}

\caption{Ferrers diagrams of $\la=(6,5,\overline{5},\overline{4},3,3,\overline{1},0,\ldots)$ and $\mu=(\overline{4},5,6,3,0,\overline{5},\overline{1},3,0,\ldots)$}

\end{center}

\end{figure}

The next definition introduces the weight in Schmidt-type theorems, i.e., the sum of parts weighed by terms of $S$.

\begin{deff}
Let $S=(s_i)_{i\geq 1}$ be a sequence of elements in a commutative monoid $\mathcal{M}$. The $S$-weight $|\cdot|_S$ is the function from $\Ppp$ to $\mathcal{M}$ defined by
\begin{equation}\label{eq:sweight}
\la \mapsto |\la|_S=\sum_{i\geq 1} \la_i\cdot s_i.
\end{equation}
By convention, we set $\overline{j}\cdot s_i = j\cdot s_i$ for all $j\in \Zo$ and $i\geq 1$. We also define the sequence $(S_{i})_{i\in \Zo\sqcup \overline{\Zu}}$ such that
\begin{equation}\label{eq:sgif}
S_i = \sum_{0<\overline{j}\leq i}s_j.
\end{equation}
\end{deff}

\begin{figure}

\begin{center}

\begin{tikzpicture}[scale=0.5, every node/.style={scale=0.7}]


\draw (0,0)--(6,0)--(6,-1)--(5,-1)--(5,-3)--(4,-3)--(4,-4)--(3,-4)--(3,-6)--(1,-6)--(1,-7)--(0,-7)--cycle;

\draw (0,-1)--(5,-1);\draw[<-, dotted,red] (-0.5,-0.5)--(5.5,-0.5);
\draw (0,-2)--(5,-2);\draw[<-, dotted] (-0.5,-1.5)--(4.5,-1.5);
\draw (0,-3)--(4,-3);\draw[<-, dotted,red] (-0.5,-2.5)--(4.5,-2.5);
\draw (0,-4)--(3,-4);\draw[<-, dotted] (-0.5,-3.5)--(3.5,-3.5);
\draw (0,-5)--(3,-5);\draw[<-, dotted,red] (-0.5,-4.5)--(2.5,-4.5);
\draw (0,-6)--(1,-6);\draw[<-, dotted] (-0.5,-5.5)--(2.5,-5.5);
\draw[->] (0,-7)--(0,-9);\draw[<-, dotted,red] (-0.5,-6.5)--(0.5,-6.5);
\draw[<-, dotted] (-0.5,-7.5)--(0,-7.5);
\draw[<-, dotted,red] (-0.5,-8.5)--(0,-8.5);

\foreach \x in {1,3,5,7,9}
\draw (-1,0.5-\x) node[red] {$\la_{\x}$};

\foreach \x in {2,4,6,8}
\draw (-1,0.5-\x) node {$\la_{\x}$};`

\draw (1,0)--(1,-6);
\draw (2,0)--(2,-6);
\draw (3,0)--(3,-4);
\draw (4,0)--(4,-3);
\draw (5,0)--(5,-1);
\draw[->] (6,0)--(8,0);

\draw[fill=black] (5,-2.5)--(5,-3)--(4.5,-3)--cycle;
\draw[fill=black] (4,-3.5)--(4,-4)--(3.5,-4)--cycle;
\draw[fill=black] (1,-6.5)--(1,-7)--(0.5,-7)--cycle;


\draw (12,0)--(16,0)--(16,-1)--(17,-1)--(17,-2)--(18,-2)--(18,-3)--(15,-3)--(15,-4)--(12,-4)--(12,-5)--(17,-5)--(17,-6)--(13,-6)--(13,-7)--(15,-7)--(15,-8)--(12,-8)--cycle;

\draw (12,-1)--(16,-1);\draw[<-, dotted,red] (11.5,-0.5)--(15.5,-0.5);
\draw (12,-2)--(17,-2);\draw[<-, dotted] (11.5,-1.5)--(16.5,-1.5);
\draw (12,-3)--(15,-3);\draw[<-, dotted,red] (11.5,-2.5)--(17.5,-2.5);
\draw[<-, dotted] (11.5,-3.5)--(14.5,-3.5);
\draw (12,-6)--(13,-6);\draw[<-, dotted] (11.5,-5.5)--(16.5,-5.5);
\draw (12,-7)--(13,-7);\draw[<-, dotted,red] (11.5,-6.5)--(12.5,-6.5);
\draw[<-, dotted] (11.5,-7.5)--(14.5,-7.5);
\draw[->] (12,-8)--(12,-9);
\draw[<-, dotted,red] (11.5,-4.5)--(12,-4.5);
\draw[<-, dotted,red] (11.5,-8.5)--(12,-8.5);

\foreach \x in {1,3,5,7,9}
\draw (11,0.5-\x) node[red] {$\mu_{\x}$};

\foreach \x in {2,4,6,8}
\draw (11,0.5-\x) node {$\mu_{\x}$};

\draw[->] (16,0)--(19,0);

\draw (13,0)--(13,-4); \draw (13,-5)--(13,-6);\draw (13,-7)--(13,-8);
\draw (14,0)--(14,-4); \draw (14,-5)--(14,-6);\draw (14,-7)--(14,-8);
\draw (15,0)--(15,-3); \draw (15,-5)--(15,-6);
\draw (16,-1)--(16,-3); \draw (16,-5)--(16,-6);
\draw (17,-2)--(17,-3);

\draw[fill=black] (16,-0.5)--(16,-1)--(15.5,-1)--cycle;
\draw[fill=black] (17,-5.5)--(17,-6)--(16.5,-6)--cycle;
\draw[fill=black] (13,-6.5)--(13,-7)--(12.5,-7)--cycle;

\end{tikzpicture}

\caption{$(1,0,1,0,1,\ldots)$-weights $\la=(6,5,\overline{5},\overline{4},3,3,\overline{1},0,\ldots)$ and $\mu=(\overline{4},5,6,3,0,\overline{5},\overline{1},3,0,\ldots)$}

\end{center}

\end{figure}

The weighted words consist in assigning some colors to the parts. In this spirit, we define a set of colors and assigned them to the elements of $\mathcal{M}$ under certain conditions.

\begin{deff}
Let $\C=\{c_i:i\in \Zo\}\cup \{c_{\overline{i}}:i\in \Zu\}$ be a set of colors. For all $\la\in \Ppp$,  set $M(\la)=\max\{\la_i:i\geq 1\}$ and let \textit{the weight and color sequence} of $\la$ be defined respectively by $$|\la| = \sum_{i\geq 1} |\la_i| \quad\text{and}\quad C(\la)=\prod_{0<\overline{i}\leq M(\la)} c_{\tilde{\la}_i}.$$
By convention, $C((0,\ldots))=1$. The assignment of the colors is as follows. The color $c_0$ can only be assigned to the neutral element $0$ of $\mathcal{M}$, and for all $i\in \Zo\sqcup \overline{\Zu}$, the colors $c_i$ can only be assigned to $S_i$.
For $k_c$ a colored part, we write $|k_c|=k$ and $c(k_c)$, called respectively the size and the color of $k_c$.

For $Ens \subseteq \overline{\Pp}$, denote by $Ens_\C^S$ the set of sequences $\la=(\la_i)_{i\geq 1}$ into colored parts, such that the sequence $(j_i)_{i\geq 1}$ belongs to $Ens$, where $c_{j_i}=c(\la_i)$ for all $i\geq 1$. For a colored partition $\la$, we define the weight and the color sequence of $\la$ to be respectively 
$$|\la| = \sum_{i\geq 1} |\la_i| \quad\text{and}\quad C(\la)=\prod_{\substack{i\geq 1\\\la_i\neq 0_{c_0}
}}c(\la_i).$$ 
\end{deff}

\begin{ex}For $\mathcal{M}=\Zo$ and $S=(\chi(i \text{ odd}))_{i\geq 1}$,
the positive integers $i$ can only be colored by $c_j$ with $j\in \{2i-1,\overline{2i-1},2i,\overline{2i}\}$. Hence, $(4_{c_8},4_{c_8},4_{c_{\overline{8}}},3_{c_{\overline{5}}},2_{c_{4}},1_{c_1},1_{c_1},1_{c_{\overline{1}}},0_{c_0},\ldots)$ is in $\overline{\Pp}_\C^S$, but not  $(4_{c_8},4_{c_{\overline{8}}},4_{c_8},3_{c_{\overline{5}}},2_{c_{4}},1_{c_1},1_{c_{\overline{1}}},1_{c_{\overline{1}}},0_{c_0},\ldots)$.
\end{ex}

The main result of the paper is a Schmidt-type theorem connecting the $S$-weight, the conjugate of an over-partition, and the $S$ greatest integer function. 

\begin{theo}\label{theo:main}
Let $\Phi_S:\la \mapsto (\mu_i)_{i\geq 1}$ such that, for all $i\geq 1$, $\mu_i$ has size $S_{\tilde{\la}_i}$ and color $c_{\tilde{\la}_i}$. Then, 
$$|\Phi_S(\la)|=|\la|_S$$ and $C(\Phi_S(\la))=C(\la)$. Moreover, $\Phi_S$ describes a bijection from $\overline{\Pp}$ to $\overline{\Pp}_\C^S$, from $\Pp$ to $\Pp_\C^S$, from $\overline{\F}$ to $\overline{\Dd}_\C^S$, and from $\overline{\Dd}$ to $\overline{\F}_\C^S$, and we have 

\begin{align}
\sum_{\la\in \overline{\Pp}} C(\la) q^{|\la|_S} &=  \prod_{i\geq 1}\frac{1+c_{\overline{i}} q^{S_i}}{1-c_i q^{S_i}},\label{eq:ovp}\\
\sum_{\la\in \Pp} C(\la) q^{|\la|_S} &=  \prod_{i\geq 1}\frac{1}{1-c_i q^{S_i}},\label{eq:p}\\
\sum_{\la\in \overline{\F}} C(\la) q^{|\la|_S} &= \prod_{i\geq 1}(1+c_{\overline{i}} q^{S_i}).\label{eq:ovf}\\
\sum_{\la\in \overline{\Dd}} C(\la) q^{|\la|_S} &= \sum_{\overline{i}>0} \prod_{0<\overline{j}<\overline{i}}
\frac{c_{\overline{j}} q^{S_j}}{1-c_jq^{S_j}}.
\label{eq:ovd}
\end{align}

\end{theo}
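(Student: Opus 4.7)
The plan is to establish four things in order: (i) the weight identity $|\Phi_S(\la)|=|\la|_S$, (ii) the color identity $C(\Phi_S(\la))=C(\la)$, (iii) the four bijection statements, and (iv) the generating function identities via factorization over allowed colors.

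For the weight identity, I would expand $|\Phi_S(\la)|=\sum_{i\geq 1}\lfloor\tilde\la_i\rfloor_S=\#\{(i,s)\in\Zu\times S:|\tilde\la_i|\geq s\}$ and swap the order of summation. Using the definition of $\tilde\la_i$, the condition $|\tilde\la_i|\geq s$ is equivalent to $|\la_s|\geq i$, so the inner count becomes $|\la_s|$ and the double sum collapses to $\sum_{s\in S}|\la_s|=|\la|_S$. For the color identity, note that $\mu_i=0_{c_0}$ holds iff $\tilde\la_i=0$ iff $\overline{i}>M(\la)$, so the products $C(\Phi_S(\la))=\prod_{i:\mu_i\neq 0_{c_0}} c_{\tilde\la_i}$ and $C(\la)=\prod_{0<\overline{i}\leq M(\la)} c_{\tilde\la_i}$ coincide.

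The bijection statements reduce to a single observation: a colored sequence $\mu\in Ens_\C^S$ is uniquely determined by its sequence of color indices $\nu=(\tilde\la_i)\in Ens$, because the assignment rule forces the size of each part to be $\lfloor\nu_i\rfloor_S$. Hence $\Phi_S$ factors as conjugation followed by the canonical coloring, and its inverse is the analogous reading of color indices followed by conjugation. What remains is to verify that conjugation maps $\overline{\Pp}\to\overline{\Pp}$, $\Pp\to\Pp$, $\overline{\F}\to\overline{\Dd}$ and $\overline{\Dd}\to\overline{\F}$. The first two are classical. For $\overline{\F}\to\overline{\Dd}$, the defining condition ``$\overline{i}\in\la$ for all $\overline{i}\leq\la_1$'' translates under conjugation to ``$\tilde\la_i$ is overlined whenever $\tilde\la_i\neq 0$'', which is precisely the definition of $\overline{\Dd}$. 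Conversely, for $\la\in\overline{\Dd}$, the size drop $|\tilde\la_i|-|\tilde\la_{i+1}|$ counts occurrences of $i$ or $\overline{i}$ in $\la$; since all positive parts of $\la$ are overlined, this drop equals $\chi(\overline{i}\in\la)=\chi(\tilde\la_i\text{ overlined})$, which is exactly the recursion defining $\overline{\F}$. I expect these two $\overline{\F}/\overline{\Dd}$ verifications to be the main technical point, since one must carefully track the order on $\Zo\sqcup\overline{\Zu}$.

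Granted the bijections, the four identities become routine factorizations. For (\ref{eq:ovp}), the sum over $\overline{\Pp}_\C^S$ splits as a product over $k\geq 1$: the non-overlined color $c_k$ has any multiplicity contributing $(1-c_kq^{\lfloor k\rfloor_S})^{-1}$, and the overlined color $c_{\overline{k}}$ appears $0$ or $1$ times contributing $(1+c_{\overline{k}}q^{\lfloor k\rfloor_S})$. Equation (\ref{eq:p}) drops the overlined factors and (\ref{eq:ovf}) drops the non-overlined factors. For (\ref{eq:ovd}), I would stratify $\overline{\F}_\C^S$ by the maximal overlined index $M$: such an element mandatorily contains $c_{\overline{1}},\ldots,c_{\overline{M}}$ and any multiset of $c_1,\ldots,c_M$, yielding $\sum_{M\geq 0}\prod_{k=1}^M c_{\overline{k}}q^{\lfloor k\rfloor_S}/(1-c_kq^{\lfloor k\rfloor_S})$, which after reindexing $M=i-1$ matches the claimed formula.
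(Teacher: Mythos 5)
Your proposal is correct and takes essentially the same route as the paper: the identical double-counting argument for $|\Phi_S(\la)|=|\la|_S$, the same factorization of $\Phi_S$ as conjugation followed by the canonical coloring map (with the $\overline{\F}\leftrightarrow\overline{\Dd}$ behaviour of conjugation isolated as the key technical lemma, exactly as in the paper's Lemma \ref{lem:conjugate}), and the same stratification of $\overline{\F}_\C^S$ by the largest overlined color index to obtain \eqref{eq:ovd}. No substantive differences.
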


\begin{rem}
We observe that for $\la\in \overline{\Pp}$, $M(\la)=\la_1$. Hence, in the above theorem, by replacing $c_i$ by $zc_i$, the power of $z$ will then count the parameter $\la_1$.
\end{rem}

For $\mathcal{M}=\Zo$ and $S=(\chi(i \text{ odd}))_{i\geq 1}$, we retrieve Theorem \ref{theo:ap1} from the identity \eqref{eq:p}. We derive from Theorem \ref{theo:main} several results for $\mathcal{M}=\Zo$ and non-periodic $S$ and some of them involve the notion of plane partitions.
A plane partition can be seen as a sequence $(\la_{i,j})_{i,j\geq 1}$ of non-negative integers, with finitely many positive terms, and such that the sequences $(\la_{i,j'})_{j'\geq 1}$ and $(\la_{i',j})_{i'\geq 1}$ are non-increasing for all $i,j\geq 1$. The weight of a plane partition is $\sum_{i,j\geq 1} \la_{i,j}$, and the generating function of plane partitions in terms of the weight is provided by Mac-Mahon \cite{Mac16} and equals 
$$\prod_{n\geq 1}\frac{1}{(1-q^n)^n}.$$
The $q$-Pochhammer symbol is defined  for $n \in \Zo \cup \{\infty\}$ as
\begin{align*}
  (a;q)_n &:= \prod_{k=0}^{n-1} (1-aq^k).
\\ (a_1, \dots, a_j ; q)_n &:= (a_1;q)_n \cdots (a_j;q)_n.
\end{align*}
We finally set $\chi(A)=1$ if the proposition $A$ is true and $\chi(A)=0$ if not.
\begin{cor}
We have the following.
\begin{enumerate}
\item For $S$ such that $s_i=\chi(\exists n\geq 1\,, i=n(n-1)/2+1)$, the number of partitions of $S$-weight $m$ is equal to the number of plane partitions of weight $m$.
The corresponding identity is 
$$\sum_{\la\in \Pp} q^{|\la|_S}  = \prod_{n\geq 1} \frac{1}{(1-q^n)^n}.$$
\item For $S$ such that $s_i=\chi(\exists n\geq 1\,, i=n(n+1)/2)$,, the number of partitions of $S$-weight $m$ is equal to the number of pairs $(\mu,\nu)$, where $\mu$ is a partition and $\nu$ is a plane partition, whose weight sum is $m$.
The corresponding identity is 
$$\sum_{\la\in \Pp} q^{|\la|_S}  = \frac{1}{(q;q)_\infty}\cdot\prod_{n\geq 1} \frac{1}{(1-q^n)^n}.$$
\item For $S$ such that $s_i=\chi(\exists n\geq 1\,, i=n^2)$,, the number of partitions of $S$-weight $m$ is equal to the number of triplets $(\mu,\nu,\kappa)$, where $\mu$ is a partition and $\nu,\kappa$ are plane partitions, whose weight sum is $m$.
The corresponding identity is 
$$\sum_{\la\in \Pp} q^{|\la|_S}  = \frac{1}{(q;q)_\infty}\cdot\left(\prod_{n\geq 1} \frac{1}{(1-q^n)^n}\right)^2.$$
\item Recall the Fibonacci sequence $F_{n+2}=F_{n+1}+F_n$ for all $n\in \Zo$ with $F_0=0$ and $F_1=1$. For $S$ such that $s_i=\chi(\exists n\geq 1\,, i=F_{n+1})$, the number of over-partitions of $S$-weight $m$ is equal to the number of over-partitions of weight $m$ where the integer $i$ appears in $F_i$ colors.
The corresponding identity is 
$$\sum_{\la\in \Pp} C(\la) q^{|\la|_S}  = \prod_{n\geq 1} \left(\frac{1+q^n}{1-q^n}\right)^{F_n}.$$
\item For $S$ such that $s_i=\chi(\exists n\geq 1\,, i=2^{n-1})$, the number of over-partitions of $S$-weight $m$ is equal to the number of over-partitions of weight $m$ where the integer $i$ appears in $2^{i-1}$ colors.
The corresponding identity is 
$$\sum_{\la\in \Pp} q^{|\la|_S}  = \prod_{n\geq 1}\left(\frac{1+q^n}{1-q^n}\right)^{2^{n-1}}.$$
\end{enumerate}
\end{cor}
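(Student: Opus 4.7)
The plan is to derive each of the five identities as a direct specialization of Theorem \ref{theo:main}, with all color variables set to $1$, followed by a regrouping of the infinite product according to the gap structure of $S$ described in Remark \ref{rem:gaps}. Writing $S=\{s_1<s_2<\cdots\}$ with $s_0=0$, the remark asserts that the integers $i\geq 1$ with $\lfloor i\rfloor_S=k$ are exactly those in $\{s_k,\ldots,s_{k+1}-1\}$, of which there are $s_{k+1}-s_k$. Setting all $c_i$ and $c_{\overline{i}}$ to $1$, identity \eqref{eq:p} becomes
$$\sum_{\la\in\Pp} q^{|\la|_S}=\prod_{i\geq 1}\frac{1}{1-q^{\lfloor i\rfloor_S}}=\prod_{k\geq 1}\frac{1}{(1-q^k)^{s_{k+1}-s_k}},$$
and \eqref{eq:ovp} yields the analogue
$$\sum_{\la\in\overline{\Pp}} q^{|\la|_S}=\prod_{k\geq 1}\left(\frac{1+q^k}{1-q^k}\right)^{s_{k+1}-s_k}.$$

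For cases (1)--(3) I would use the first display. In (1), $s_n=n(n-1)/2+1$ gives $s_{n+1}-s_n=n$, so the right-hand side is exactly MacMahon's generating function $\prod_{n\geq 1}(1-q^n)^{-n}$ for plane partitions; this yields both the identity and the equinumerous statement with plane partitions. In (2), $s_n=n(n+1)/2$ gives $s_{n+1}-s_n=n+1$, and factoring $(1-q^n)^{-(n+1)}=(1-q^n)^{-1}(1-q^n)^{-n}$ separates the $q$-Pochhammer generating function $(q;q)_\infty^{-1}$ of partitions from MacMahon's plane partition factor, producing pairs $(\mu,\nu)$. In (3), $s_n=n^2$ gives $s_{n+1}-s_n=2n+1=1+2n$, producing the product of one partition factor and two plane partition factors, hence the triples $(\mu,\nu,\kappa)$.

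For (4) and (5) I would use the over-partition display. The defining Fibonacci recursion $F_{n+2}=F_{n+1}+F_n$ immediately gives $s_{n+1}-s_n=F_{n+2}-F_{n+1}=F_n$, and the identity $2^n-2^{n-1}=2^{n-1}$ gives the analogous statement for $S=\{2^{n-1}\}$. The combinatorial identifications then rely only on the elementary observation that the generating function of over-partitions in which every integer $n$ is allowed to carry one of $\alpha_n$ colors is precisely $\prod_{n\geq 1}\left(\frac{1+q^n}{1-q^n}\right)^{\alpha_n}$, applied with $\alpha_n=F_n$ and $\alpha_n=2^{n-1}$ respectively.

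There is no real obstacle beyond bookkeeping: the corollary amounts to five routine gap-computations for $s_{n+1}-s_n$, each of which matches either MacMahon's classical product or the standard generating function for colored over-partitions. The only mildly delicate step is recognizing, in (2) and (3), how the exponents $n+1$ and $2n+1$ split into the sum of a constant and multiples of $n$ so as to reveal the correct product of partition and plane-partition generating functions.
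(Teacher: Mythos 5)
Your proposal is correct and follows exactly the route the paper intends: the corollary is stated as an immediate consequence of Theorem \ref{theo:main} and Remark \ref{rem:gaps}, i.e., specializing all colors to $1$ in \eqref{eq:p} (resp.\ \eqref{eq:ovp}) and regrouping the product according to the multiplicities $s_{k+1}-s_k$ of the values of $\lfloor\cdot\rfloor_S$, which is precisely your gap computation. Your five evaluations of $s_{n+1}-s_n$ and the identifications with MacMahon's product and the colored over-partition generating functions all check out.
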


\subsubsection{Unrestricted Schimdt-type partitions modulo $m$}

We now return to the case where $S$ is periodic of period $m\geq 1$. Set for all $u\in \Zu$, set $c_u=c_{u+m}$ and $c_{\overline{u}}=c_{\overline{u+m}}$. We also define for all $\la =(\la_i)_{i\geq 1}\in \overline{\Pp}$ and $1\leq j\leq m$
\begin{align*}
\rho_j(\la) &= \sum_{i\geq 0} \la_{j+im}-\la_{j+1+im}-\chi(\la_{j+im}\text{ is over-lined})\\
\rho_{\overline{j}}(\la) &= \sharp\{i\geq 0: \la_{j+im}\text{ is over-lined}\}.
\end{align*}
The result corresponding to Theorem \ref{theo:main} is then the following.

\begin{theo}\label{theo:unrestrictedmodm}
We have
\begin{align}
\sum_{\la\in \overline{\Pp}} \left(\prod_{j=1}^m c_j^{\rho_j(\la)}c_{\overline{j}}^{\rho_{\overline{j}}(\la)} \right)q^{|\la|_S} &= \frac{(-c_{\overline{1}} q^{S_1},\ldots,-c_{\overline{m}} q^{S_m};q^{S_m})_\infty}{(c_{1} q^{S_1},\ldots,c_{m} q^{S_m};q^{S_m})_\infty},\label{eq:ovpmodm}\\
\sum_{\la\in \Pp} \left(\prod_{j=1}^m c_j^{\rho_j(\la)} \right) q^{|\la|_S} &= \frac{1}{(c_{1} q^{S_1},\ldots,c_{m} q^{S_m};q^{S_m})_\infty},\label{eq:pmodm}
\\
\sum_{\la\in \overline{\F}} \left(\prod_{j=1}^m c_{\overline{j}}^{\rho_{\overline{j}}(\la)} \right)q^{|\la|_S} &= (-c_{\overline{1}} q^{S_1},\ldots,-c_{\overline{m}} q^{S_m};q^{S_m})_\infty,\label{eq:ovfmodm}
\\
\sum_{\la\in \overline{\Dd}} \left(\prod_{j=1}^m c_j^{\rho_j(\la)}c_{\overline{j}}^{\rho_{\overline{j}}(\la)} \right)q^{|\la|_S} &=\sum_{n\geq 0} \sum_{\ell=1}^m \frac{(c_{\overline{1}}\cdots c_{\overline{\ell-1}})^{n+1} q^{(n+1)(S_1+\cdots+S_{\ell-1})+\frac{(\ell-1)n(n+1)}{2}S_m}}{(c_{1} q^{S_1},\ldots,c_{\ell-1} q^{S_{\ell-1}};q^{S_m})_{n+1}}\nonumber\\
&\qquad\qquad\qquad\cdot\frac{(c_{\overline{\ell}}\cdots c_{\overline{m}})^{n} q^{n(S_{\ell}+\cdots+S_{m})+\frac{(m-\ell+1)n(n-1)}{2}S_m}}{(c_{\ell} q^{S_{\ell}},\ldots,c_{m} q^{S_m};q^{S_m})_{n}}.\label{eq:ovdmodm}
\end{align}
\end{theo}

We here give an example with $\mathcal{M}=(\Zo)^{m+1}$ and $$s_i=(1,\chi(i\equiv 1 \mod m),\chi(i\equiv 2 \mod m),\ldots,\chi(i\equiv m \mod m))$$
for all $i\in \Zu$.
We also define parameters $q_0,\ldots,q_m$ and the power of $q$ such that
$$q^{(u_0,\ldots,u_m)}=q_0^{u_0}q_1^{u_1}\cdots q_m^{u_m}$$
for all $(u_0,\ldots,u_m)\in (\Zo)^{m+1}$.
\begin{cor}
We have
\begin{align*}
\sum_{\la\in \overline{\Pp}} \left(\prod_{j=1}^m c_j^{\rho_j(\la)}c_{\overline{j}}^{\rho_{\overline{j}}(\la)} \right)q_0^{|\la|}\prod_{i=1}^m q_i^{\sum_{j\geq 0}\la_{i+jm}} &= \prod_{i=1}^m\frac{(-c_{\overline{i}}q_0^i q_1\cdots q_i;q_0^m q_1\cdots q_m)_\infty}{(c_{i}q_0^i q_1\cdots q_i;q_0^m q_1\cdots q_m)_\infty},\\
\sum_{\la\in \Pp} \left(\prod_{j=1}^m c_j^{\rho_j(\la)}\right) q_0^{|\la|}\prod_{i=1}^m q_i^{\sum_{j\geq 0}\la_{i+jm}} &= \prod_{i=1}^m\frac{1}{(c_{i}q_0^i q_1\cdots q_i;q_0^m q_1\cdots q_m)_\infty},
\\
\sum_{\la\in \overline{\F}} \left(\prod_{j=1}^m c_{\overline{j}}^{\rho_{\overline{j}}(\la)} \right)q_0^{|\la|}\prod_{i=1}^m q_i^{\sum_{j\geq 0}\la_{i+jm}} &= \prod_{i=1}^m(-c_{\overline{i}}q_0^i q_1\cdots q_i;q_0^m q_1\cdots q_m)_\infty,
\\
\sum_{\la\in \overline{\Dd}} \left(\prod_{j=1}^m c_j^{\rho_j(\la)}c_{\overline{j}}^{\rho_{\overline{j}}(\la)} \right)q_0^{|\la|}\prod_{i=1}^m q_i^{\sum_{j\geq 0}\la_{i+jm}} &=\sum_{n\geq 0} \sum_{\ell=1}^m \prod_{i=1}^{\ell-1}\frac{(c_{\overline{i}}q_0^i q_1\cdots q_i)^{n+1}\cdot (q_0^m q_1\cdots q_m)^{\frac{n(n+1)}{2}}}{(c_{i}q_0^i q_1\cdots q_i;q_0^m q_1\cdots q_m)_{n+1}}\\
&\qquad\qquad\cdot \prod_{i=\ell}^{m}\frac{(c_{\overline{i}}q_0^i q_1\cdots q_i)^{n}\cdot (q_0^m q_1\cdots q_m)^{\frac{n(n-1)}{2}}}{(c_{i}q_0^i q_1\cdots q_i;q_0^m q_1\cdots q_m)_{n}}.
\end{align*}
\end{cor}

\subsubsection{Restricted Schimdt-type partitions modulo $m$}
Under the assumptions of the previous section, we now shift our focus on new subsets of over-partitions with some restrictions on the number of occurrences of non-over-lined parts.
Let $\overline{\Pp}_m$ be the set of over-partitions with less than $m$ non-over-lined occurrences for all positive integers, 
let $\Pp_m$ be the set of partitions of $\overline{\Pp}_m$, and let $\overline{\F}_m$ be the subset of $\overline{\Pp}_m$ consisting of over-partitions such that $\la_i-\la_{i+1}=\chi(\la_i \text{ is over-lined})$ for all $i$ not divisible by $m$.
The critical result related to sets with periodic gaps is given by the following theorem.

\begin{theo}\label{theo:restrictedmodm}
By setting $c_m=1$, we have

\begin{align}
\sum_{\la\in \overline{\Pp}_m} \left(\prod_{j=1}^{m-1} c_j^{\rho_j(\la)}c_{\overline{j}}^{\rho_{\overline{j}}(\la)} \right) c_{\overline{m}}^{\rho_{\overline{m}}(\la)} q^{|\la|_S} &= \frac{(-c_{\overline{1}} q^{S_1},\ldots,-c_{\overline{m}} q^{S_m};q^{S_m})_\infty}{(c_{1} q^{S_1},\ldots,c_{m-1} q^{S_{m-1}};q^{S_m})_\infty},\label{eq:restrictedovpmodm}\\
\sum_{\la\in \Pp_m} \left(\prod_{j=1}^{m-1} c_j^{\rho_j(\la)} \right) q^{|\la|_S} &= \frac{1}{(c_{1} q^{S_1},\ldots,c_{m-1} q^{S_{m-1}};q^{S_m})_\infty},
\label{eq:restrictedpmodm}\\
\sum_{\la\in \overline{\F}_m} \left(\prod_{j=1}^m c_{\overline{j}}^{\rho_{\overline{j}}(\la)} \right)q^{|\la|_S} &= (-c_{\overline{1}} q^{S_1},\ldots,-c_{\overline{m}} q^{S_m};q^{S_m})_\infty.\label{eq:restrictedovdmodm}
\end{align}
\end{theo}

In particular, for $\mathcal{M}=\Zo$ and $s_i=\chi(\exists n\in \{1,\ldots,t\}\,,\, i=r_n)$ for $1\leq i\leq m$, the identity \eqref{eq:restrictedpmodm} is equivalent to Theorem \ref{theo:ak}. From \eqref{eq:ovfmodm} and \eqref{eq:restrictedovdmodm}, one can derive the following identity.
\begin{cor}
Suppose that $c_m=c_0=1$. Then, at fixed $S$-weight and color sequence, there are as many over-partitions in $\overline{\F}$ as over-partitions in $\overline{\F}_m$.
\end{cor}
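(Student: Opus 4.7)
The plan is to deduce the corollary directly by comparing the right-hand sides of equations \eqref{eq:ovfmodm} and \eqref{eq:restrictedovdmodm}. The first step is to observe, by direct inspection, that these two right-hand sides are literally the same product:
$$\left(\prod_{j=1}^{s_{1}-1}(-c_{\overline{j}};q^t)_\infty\right)\cdot \left(\prod_{i=1}^{t-1}\prod_{j=s_i}^{s_{i+1}-1}(-c_{\overline{j}} q^i;q^t)_\infty\right)\cdot \left(\prod_{j=s_t}^{m}(-c_{\overline{j}}q^t;q^t)_\infty\right).$$
Only the overlined color variables $c_{\overline{j}}$ and the indeterminate $q$ appear on the right, while none of $c_0,c_1,\ldots,c_m$ shows up. This makes the specialization $c_0=c_m=1$ the natural one.

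The second step is to check how the left-hand sides collapse under this specialization. For $\la\in\overline{\F}$, the defining relation $\la_i-\la_{i+1}=\chi(\la_i\text{ is over-lined})$ holds at every index $i\geq 1$, so every summand in each $\rho_j(\la)$ vanishes; hence $\rho_j(\la)=0$ for all $1\leq j\leq m$ and the LHS of \eqref{eq:ovfmodm} reduces to $\sum_{\la\in\overline{\F}}\prod_{j=1}^{m}c_{\overline{j}}^{\rho_{\overline{j}}(\la)}\,q^{|\la|_S}$. For $\la\in\overline{\F}_m$, the same relation only holds at indices not divisible by $m$; for $1\leq j\leq m-1$, every index $j+im$ is non-zero modulo $m$, so the constraint still applies and $\rho_j(\la)=0$, while $\rho_m(\la)$ may be positive but is absorbed by setting $c_m=1$. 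Under the specialization, the LHS of \eqref{eq:restrictedovdmodm} therefore becomes $\sum_{\la\in\overline{\F}_m}\prod_{j=1}^{m}c_{\overline{j}}^{\rho_{\overline{j}}(\la)}\,q^{|\la|_S}$.

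The final step is coefficient extraction. Since both specialized left-hand sides equal the same product, they agree as formal series in $q$ and the $c_{\overline{j}}$. For any fixed $S$-weight $N$ and any fixed exponent tuple $(b_1,\ldots,b_m)$, the number of $\la\in\overline{\F}$ with $|\la|_S=N$ and $\rho_{\overline{j}}(\la)=b_j$ for each $j$ is thus equal to the corresponding count over $\overline{\F}_m$; the tuple $(b_1,\ldots,b_m)$ encodes precisely the color sequence after specialization, which yields the corollary. I do not foresee a genuine obstacle: the argument reduces to a direct comparison of the two preceding theorems, with $c_0=1$ sanitizing the infinite tail of zero parts and $c_m=1$ absorbing the block-boundary drops unique to $\overline{\F}_m$.
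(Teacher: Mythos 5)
Your proposal is correct and takes essentially the same route as the paper, which derives this corollary precisely by noting that the right-hand sides of \eqref{eq:ovfmodm} and \eqref{eq:restrictedovdmodm} are the identical product and extracting coefficients. Your additional verification that $\rho_j(\la)=0$ for all $j$ on $\overline{\F}$ and for $1\leq j\leq m-1$ on $\overline{\F}_m$ (with $c_m=1$ absorbing $\rho_m$) correctly supplies the details the paper leaves implicit.
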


For $\mathcal{M}=(\Zo)^{m+1},$
$$s_i=(1,\chi(i\equiv 1 \mod m),\chi(i\equiv 2 \mod m),\ldots,\chi(i\equiv m \mod m))$$
for all $i\in \Zu$,
and parameters $q_0,\ldots,q_m$ such that
$$q^{(u_0,\ldots,u_m)}=q_0^{u_0}q_1^{u_1}\cdots q_m^{u_m}$$
for all $(u_0,\ldots,u_m)\in (\Zo)^{m+1}$,
we have the following corollary to Theorem \ref{theo:restrictedmodm}. 
\begin{cor}
We have
\begin{align*}
\sum_{\la\in \overline{\Pp}_m}  \left(\prod_{j=1}^{m-1} c_j^{\rho_j(\la)}c_{\overline{j}}^{\rho_{\overline{j}}(\la)} \right)c_{\overline{m}}^{\rho_{\overline{m}}(\la)}q_0^{|\la|}\prod_{i=1}^m q_i^{\sum_{j\geq 0}\la_{i+jm}} &= \prod_{i=1}^{m-1}\frac{(-c_{\overline{i}}q_0^i q_1\cdots q_i;q_0^m q_1\cdots q_m)_\infty}{(c_{i}q_0^i q_1\cdots q_i;q_0^m q_1\cdots q_m)_\infty}\\
&\qquad\times (-c_{\overline{m}}q_0^m q_1\cdots q_m;q_0^m q_1\cdots q_m)_\infty\\
\sum_{\la\in \Pp_m} \left(\prod_{j=1}^{m-1} c_j^{\rho_j(\la)}\right) q_0^{|\la|}\prod_{i=1}^m q_i^{\sum_{j\geq 0}\la_{i+jm}} &= \prod_{i=1}^{m-1}\frac{1}{(c_{i}q_0^i q_1\cdots q_i;q_0^m q_1\cdots q_m)_\infty},
\\
\sum_{\la\in \overline{\F}_m} \left(\prod_{j=1}^m c_{\overline{j}}^{\rho_{\overline{j}}(\la)} \right)q_0^{|\la|}\prod_{i=1}^m q_i^{\sum_{j\geq 0}\la_{i+jm}} &= \prod_{i=1}^m(-c_{\overline{i}}q_0^i q_1\cdots q_i;q_0^m q_1\cdots q_m)_\infty.
\end{align*}
\end{cor}

Using \eqref{eq:restrictedpmodm}, we obtain the following identity.

\begin{cor}
Let $d,d_1,\ldots,d_t$ be positive integers such that $d_1\leq\cdots\leq d_t\leq d$, and set $d_0=0$ and $d_{t+1}=d$. Then, for all non-negative integer $n$, the number of partitions $n$ into parts congruent to $d_1,\ldots,d_t \mod d$ such parts congruent to $d_i\mod d$ are indexed by $i$, is equal to the number of partitions $(\lambda_1,\ldots)$ into parts repeating at most $t$ times, and such that
$$n=\sum_{i=1}^{t+1}(d_{i}-d_{i-1})\sum_{j\equiv i \mod t+1} \la_j.$$
The corresponding $q$-series is
$$\sum_{\la\in \Pp_{t+1}} q^{\sum_{i=1}^{t+1}(d_{i}-d_{i-1})\sum_{j\equiv i \mod t+1} \la_j} = \frac{1}{(q^{d_1},\ldots,q^{d_t};q^d)_\infty}.$$
\end{cor} 
We thus derive the following companions of the Andrews--Bressoud--Gordon identities \cite{A74,B80}.

\begin{theo}[Companions of the Andrews--Bressoud--Gordon identities]
Let $i,k,j$ be non-negative integers such that $j\in \{0,1\}$ and $1\leq i\leq k+j-1$. Then, for all non-negative integer $n$, the three following sets of partitions are equinumerous:
\begin{itemize}
\item[\rm 1)] the set of partitions of $n$ into parts not congruent to $0,\pm i \mod 2k+j$,
\item[\rm 2)] the set of partitions $(\la_1,\ldots)$ of $n$ with at most $i-1$ parts equal to $1$, and such that $\la_t-\la_{t+k-1}\geq 2$ for all $t\geq 1$, with the additional condition that if $\la_t-\la_{t+k-2}\leq 1$, then $$\la_t+\cdots+\la_{t+k-2}\equiv i-1 \mod 2-j,$$
\item[\rm 3)] the set of partitions $(\la_1,\ldots)$ of $n$ such that $\la_t-\la_{t+2k+j-3}\geq 1$ for all $t\geq 1$, and 
$$n=\sum_{t\geq 1}(1+\chi(t\equiv i)+\chi(t\equiv 2k+j-i-1))\la_t$$
with the congruence taken modulo $2k+j-2$.
\end{itemize}
\end{theo}

\subsubsection{Block partitions}

Let $U=(u_n)_{n\geq 1}$ be a sequence of positive integers and set $U_n = \sum_{i=1}^n u_n$ for all $n\geq 0$. 
\begin{deff}
A \textit{block over-partition} of type $U$ is an infinite sequence $\la=(\la_i)_{i\geq 1}$ in $\Ppp$ such that, for all $n\geq 1$, $\min\{\la_{U_{n-1}+1},\ldots,\la_{U_n}\}\geq \max\{\la_{U_n+1},\ldots,\la_{U_{n+1}}\}$, i.e.,

$$
\begin{array}{c}
\la_1\\
\vdots\\
\la_{U_1}
\end{array}
\geq 
\begin{array}{c}
\la_{U_1+1}\\
\vdots\\
\la_{U_2}
\end{array}
\geq \cdots \geq 
\begin{array}{c}
\la_{U_{n-1}+1}\\
\vdots\\
\la_{U_{n}}
\end{array}
\geq 
\begin{array}{c}
\la_{U_{n}+1}\\
\vdots\\
\la_{U_{n+1}}
\end{array}
\geq \cdots
$$
Let $\overline{\Pp}(U)$ be the set of block over-partitions of type $U$, and let the set of block partitions of type $U$ be $\Pp(U)=\overline{\Pp}(U)\cap \Pp$. Finally, let $\overline{\Dd}(U)$ be the set of over-partitions of $\overline{\Pp}(U)$ such that, for all $n\geq 1$, $\min\{\la_{U_{n-1}+1},\ldots,\la_{U_n}\}-\max\{\la_{U_n+1},\ldots,\la_{U_{n+1}}\}>0$ when $\min\{\la_{U_{n-1}+1},\ldots,\la_{U_n}\}>0$.
\end{deff}
The aforementioned objects generalize the $k$-elongated partitions, which correspond to the block partitions of type $U$ for $u_{(k+1)i+1}=1$ and $u_{(k+1)i+1+j}=2$ for all $i\geq 1$ and $1\leq j\leq k$.
To understand the combinatorics behind the block over-partitions, we first need to comprehend their link to the symmetric group.

\begin{deff}
For a positive integer $n$, let $\mathcal{S}_n$ be the corresponding symmetric group, i.e. the set of permutations of $\{1,\ldots,n\}$. Define the polynomials
$$\overline{\E}_n (x_1,y_1,x_2,y_2,\ldots,x_{n-1},y_{n-1},x_n,y_n) = (y_n+x_n)\sum_{\sigma\in \mathcal{S}_n} \prod_{i=1}^{n-1}(y_i^{\chi(\sigma(i)>\sigma(i+1))}+x_i),$$
and 
$$\E_n (y_1,y_2,\ldots,y_{n-1})=\overline{\E}_n (0,y_1,0,y_2,\ldots,0,y_{n-1},0,1) = \sum_{\sigma\in \mathcal{S}_n}\prod_{i=1}^{n-1}y_i^{\chi(\sigma(i)>\sigma(i+1))}.$$
\end{deff}
Note that $\overline{\E}_n (0,x,0,x,\ldots,0,x,0,x)$ is the Eulerian polynomial  and satisfies  
$$\overline{\E}_n (0,x,0,x,\ldots,0,x,0,x) = (1-x)^{n+1}\sum_{m\geq 0} m^n x^m$$
(see Proposition 1.4.4 in \cite{St11}), while 
$$\overline{\E}_n (0,q,0,q^2,\ldots,0,q^{n-1},0,1) = \frac{(q;q)_n}{(1-q)^n}$$
(see Proposition 1.4.6 in \cite{St11}). For example, 
$$\overline{\E}_1(x_1)=y_1+x_1,\quad\E_1=1,$$
$$\overline{\E}_2(x_1,y_1,x_2)=(y_2+x_2)[(1+x_1)+(y_1+x_1)],\quad\E_2(y_1)=1+y_1,$$
$$\overline{\E}_3(x_1,y_1,x_2,y_2,x_3)=(y_3+x_3)[(1+x_1)(1+x_2)+2(y_1+x_1)(1+x_2)+2(1+x_1)(y_2+x_2)+(y_1+x_1)(y_2+x_2)],$$
$$\E_3(y_1,y_2)=1+2y_1+2y_2+y_1y_2.$$
The following theorem then gives the generating function of the block over-partitions of type $U$. 
\begin{theo}\label{theo:pu}
We have 
\begin{align}
\sum_{\la\in \overline{\Pp}(U)}C(\la)q^{|\la|}&= \prod_{n\geq 1}\frac{\overline{\E}_{u_n}(c_{\overline{U_{n-1}+1}}q^{U_{n-1}+1},c_{U_{n-1}+1}q^{U_{n-1}+1},\ldots,c_{\overline{U_{n}-1}}q^{U_{n}-1},c_{U_{n}-1}q^{U_{n}-1},c_{\overline{U_{n}}}q^{U_{n}},1)}{(1-c_{U_{n-1}+1}q^{U_{n-1}+1})\cdots(1-c_{U_{n}}q^{U_{n}})},\label{eq:ovpu}\\
\sum_{\la\in \Pp(U)}C(\la)q^{|\la|}&= \prod_{n\geq 1}\frac{\E_{u_n}(c_{U_{n-1}+1}q^{U_{n-1}+1},\ldots,c_{U_{n}-1}q^{U_{n}-1})}{(1-c_{U_{n-1}+1}q^{U_{n-1}+1})\cdots(1-c_{U_{n}}q^{U_{n}})},\label{eq:pu}\\
\sum_{\la\in \overline{\Dd}(U)}C(\la)q^{|\la|}&= \sum_{n\geq 1} \frac{\overline{\E}_{u_n}(c_{\overline{U_{n-1}+1}}q^{U_{n-1}+1},c_{U_{n-1}+1}q^{U_{n-1}+1},\ldots,c_{\overline{U_{n}-1}}q^{U_{n}-1},c_{U_{n}-1}q^{U_{n}-1},0,1)}{(1-c_{U_{n-1}+1}q^{U_{n-1}+1})\cdots(1-c_{U_{n}-1}q^{U_{n}-1})}\nonumber\\
&\qquad\cdot \prod_{m=1}^{n-1}\frac{\overline{\E}_{u_m}(c_{\overline{U_{m-1}+1}}q^{U_{m-1}+1},c_{U_{m-1}+1}q^{U_{m-1}+1},\ldots,c_{\overline{U_{m}}}q^{U_{m}},c_{U_{m}}q^{U_{m}})}{(1-c_{U_{m-1}+1}q^{U_{m-1}+1})\cdots(1-c_{U_{m}}q^{U_{m}})}.
\label{eq:ovdu}
\end{align}
\end{theo}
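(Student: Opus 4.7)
The plan is to reduce \eqref{eq:ovpu}--\eqref{eq:ovdu} to Theorem \ref{theo:main} (specialized to $S=\Zu$) via a blockwise sorting of $\la\in\overline{\Pp}(U)$ combined with an Eulerian-type enumeration of within-block rearrangements. For $\la\in\overline{\Pp}(U)$, let $\mu\in\overline{\Pp}$ be obtained by sorting each block of $\la$ non-increasingly in the order $0<\overline{1}<1<\overline{2}<\cdots$. Since $\tilde{\la}_i=\sharp\{j\geq 1:\la_j\geq\overline{i}\}$ depends only on the multiset of parts of $\la$, we have $\tilde{\la}=\tilde{\mu}$, hence $C(\la)=C(\mu)$, and $|\la|=|\mu|$. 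The fibre of this sorting over $\mu$ consists of the sequences in $\Ppp$ obtained by permuting the parts within each block of $\mu$, subject to the $\Ppp$ rule that the over-lined part of each size occurs at the positionally last occurrence of that size inside its block.

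I would then rewrite the right-hand side of \eqref{eq:ovpu} using the algebraic identity
$$\overline{\E}_{u_n}(x_1,y_1,\dots,x_{u_n},1)=\prod_{i=1}^{u_n}(1+x_i)\cdot\E_{u_n}(z_1,\dots,z_{u_n-1}),\qquad z_i=\frac{y_i+x_i}{1+x_i},$$
which is obtained by grouping $y_i^{\chi(\sigma(i)>\sigma(i+1))}+x_i=(1+x_i)\cdot z_i^{\chi(\sigma(i)>\sigma(i+1))}$ inside the defining sum over $\sigma\in S_{u_n}$. Substituting $x_i\mapsto c_{\overline{U_{n-1}+i}}q^{U_{n-1}+i}$ and $y_i\mapsto c_{U_{n-1}+i}q^{U_{n-1}+i}$ and taking the product over $n$, the right-hand side of \eqref{eq:ovpu} becomes
$$\Bigl(\prod_{i\geq 1}\frac{1+c_{\overline{i}}q^i}{1-c_iq^i}\Bigr)\cdot\prod_{n\geq 1}\E_{u_n}(z_1^{(n)},\dots,z_{u_n-1}^{(n)}).$$
By Theorem \ref{theo:main} the first factor equals $\sum_{\mu\in\overline{\Pp}}C(\mu)q^{|\mu|}$, so it suffices to show that the second factor, when expanded and multiplied by $C(\mu)q^{|\mu|}$, accounts for the within-block rearrangements enumerated in the first step.

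That reduction would be established by expanding each $\E_{u_n}(z^{(n)})=\sum_{\sigma_n\in S_{u_n}}\prod_{i\in\mathrm{Des}(\sigma_n)}z_i^{(n)}$ and constructing a bijection between pairs $(\mu,(\sigma_n))$ and block over-partitions $\la\in\overline{\Pp}(U)$, in which the descents of $\sigma_n$ inside block $n$ correspond exactly to the transitions between distinct size classes and to the unique unmarked-to-marked transitions within block $n$ of $\la$. The identities \eqref{eq:pu}--\eqref{eq:ovdu} then follow by specialization: setting all $c_{\overline{i}}=0$ (so $x_i=0$ and $z_i=y_i$) collapses $\overline{\E}_{u_n}$ to $\E_{u_n}$ and restricts the bijection to $\Pp(U)$, giving \eqref{eq:pu}. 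For \eqref{eq:ovdu}, the strict inter-block inequality imposed by $\overline{\Dd}(U)$ (whenever the block's minimum is positive) is encoded by replacing the ``free last position'' $y_{u_n}=1$ in every non-terminal block by $y_{u_n}=c_{U_n}q^{U_n}$ and specializing the terminal block's $x_{u_n}$ to $0$; the outer sum $\sum_{n\geq 1}$ then stratifies $\overline{\Dd}(U)$ according to the index of the last nonzero block.

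The principal obstacle is the blockwise bijection outlined above: matching multiset permutations of each block (with the over-line-at-last rule) to descent-weighted permutations $\sigma_n\in S_{u_n}$ so that the resulting weights coincide with the $z^{(n)}_i$. I expect this to proceed by a Foata-type standardization, with careful bookkeeping of how the colors $c_i,c_{\overline{i}}$ and the over-line positions propagate through the sorting map and through the substitution $z_i=(y_i+x_i)/(1+x_i)$.
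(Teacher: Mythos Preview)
Your blockwise sorting $\la\mapsto(\Sigma,\mu)$ is exactly the paper's bijection $\Gamma_U$ (Lemma~\ref{lem:compSigma}), and your algebraic identity
\[
\overline{\E}_{u_n}(x_1,y_1,\dots,x_{u_n},1)=\prod_{i=1}^{u_n}(1+x_i)\cdot \E_{u_n}(z_1,\dots,z_{u_n-1}),\qquad z_i=\frac{y_i+x_i}{1+x_i},
\]
is correct. So the two ends of your argument are fine; the problem is in the middle.

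The sorting bijection gives $\sum_{\la\in\overline{\Pp}(U)}C(\la)q^{|\la|}=\sum_{(\Sigma,\mu)\in\Sigma_U}C(\mu)q^{|\mu|}$, a sum over \emph{constrained} pairs: $\mu$ must be $\Sigma$-admissible (i.e.\ $\mu_j-\mu_{j+1}>0$ at each descent $j$ of $\Sigma$). Your proposed factorization, however, reads the right-hand side as $\bigl(\sum_{\mu\in\overline{\Pp}}C(\mu)q^{|\mu|}\bigr)\cdot\bigl(\sum_{\Sigma}\prod_{j\in\mathrm{Des}(\Sigma)}z_j\bigr)$, which is a sum over \emph{all} pairs $(\mu,\Sigma)$ with an extra weight $\prod z_j$. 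These two sums are not in an obvious bijection: the set $\Sigma_U$ is strictly smaller than $\overline{\Pp}\times\prod_nS_{u_n}$ (whenever some $u_n>1$), and the weights differ. A Foata-type map would have to absorb the rational weight $z_j=\frac{(c_j+c_{\overline{j}})q^j}{1+c_{\overline{j}}q^j}$ combinatorially, which is not what Foata standardization does; there is no clean way to interpret the denominator $1+c_{\overline{j}}q^j$ on the pair side.

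What actually closes the gap is passing to the conjugate. The paper composes your sorting bijection with $\mu\mapsto\tilde{\mu}$ (Lemma~\ref{lem:compOmega}), landing in $\Omega_U=\{(\Sigma,\nu):\nu\text{ is }\Sigma\text{-sizable}\}$. The point is that ``$\mu$ is $\Sigma$-admissible'' becomes, after conjugation, ``$\nu$ has a part of size $j$ for every $j\in\mathrm{Des}(\Sigma)$'' --- a condition that is \emph{independent across $j$}. Since moreover $C(\mu)=\prod_{i:\nu_i>0}c_{\nu_i}$ and $|\mu|=|\nu|$, for each fixed $\Sigma$ the inner sum factors as
\[
\sum_{\nu:\Sigma\text{-sizable}}\Bigl(\prod_{i:\nu_i>0}c_{\nu_i}\Bigr)q^{|\nu|}
=\prod_{j\geq 1}\frac{(c_jq^j)^{\chi(\Sigma(j)>\Sigma(j+1))}+c_{\overline{j}}q^j}{1-c_jq^j},
\]
and summing over $\Sigma$ reproduces the $\overline{\E}_{u_n}$ products directly. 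Your algebraic identity is then just a repackaging of this same factorization --- but you cannot bypass the conjugation step, because $C(\mu)$ is defined in terms of $\tilde{\mu}$ in the first place, so the weights only become multiplicative on the conjugate side. Once you insert this step, your derivations of \eqref{eq:pu} and \eqref{eq:ovdu} by specialization go through as you describe.
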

Our goal is to state a generalization of Theorem \ref{theo:ap2}
in Schmidt's spirit. To do so, we consider an increasing sequence $(n_i)_{i\geq 1}$ of positive integers, finite or not, such that $u_{n_i}=1$ for all $1\leq i$. Set $n_0=0$, $\mathcal{M}=\Zo$ and $s_i=\chi(\exists j\geq 1,\, i=U_{n_j})$. We then have the following.
\begin{theo}\label{theo:pus}
We have 
\begin{align}
\sum_{\la\in \overline{\Pp}(U)}C(\la)q^{|\la|_S}&= \prod_{i\geq 1}\frac{1+c_{\overline{n_i}}q^i}{1-c_{n_i}q^{i}}\nonumber\\&\qquad\cdot \prod_{m=n_{i-1}+1}^{n_i-1}\frac{\overline{\E}_{u_m}(c_{\overline{U_{m-1}+1}}q^{i-1},c_{U_{m-1}+1}q^{i-1},\ldots,c_{\overline{U_{m}-1}}q^{i-1},c_{U_{n}-1}q^{i-1},c_{\overline{U_{n}}}q^{i-1},1)}{(1-c_{U_{n-1}+1}q^{i-1})\cdots(1-c_{U_{n}}q^{i-1})},
\label{eq:ovpus}\\
\sum_{\la\in \Pp(U)}C(\la)q^{|\la|_S}&= \prod_{i\geq 1}\frac{1}{1-c_{n_i}q^{i}}\cdot \prod_{m=n_{i-1}+1}^{n_i-1}\frac{\E_{u_m}(c_{U_{m-1}+1}q^{i-1},\ldots,c_{U_{n}-1}q^{i-1})}{(1-c_{U_{n-1}+1}q^{i-1})\cdots(1-c_{U_{n}}q^{i-1})},\label{eq:pus}\\
\sum_{\la\in \overline{\Dd}(U)}C(\la)q^{|\la|_S}&= \sum_{i\geq 1}
\sum_{k=n_{i-1}+1}^{n_i-1} \frac{\overline{\E}_{u_k}(c_{\overline{U_{k-1}+1}}q^{i-1},c_{U_{k-1}+1}q^{i-1},\ldots,c_{\overline{U_{k}-1}}q^{i-1},c_{U_{k}-1}q^{i-1},0,1)}{(1-c_{U_{k-1}+1}q^{i-1})\cdots(1-c_{U_{k}-1}q^{i-1})}\nonumber\\
&\qquad\cdot \prod_{m=n_{i-1}+1}^{k-1}\frac{\overline{\E}_{u_m}(c_{\overline{U_{m-1}+1}}q^{i-1},c_{U_{m-1}+1}q^{i-1},\ldots,c_{\overline{U_{m}}}q^{i-1},c_{U_{m}}q^{i-1})}{(1-c_{U_{n-1}+1}q^{i-1})\cdots(1-c_{U_{n}}q^{i-1})}\nonumber\\
&\qquad\cdot\prod_{j= 1}^{i-1}\frac{(c_j+c_{\overline{n_j}})q^j}{1-c_{n_j}q^{j}}\nonumber\\&\qquad\cdot \prod_{m=n_{j-1}+1}^{n_j-1}\frac{\overline{\E}_{u_m}(c_{\overline{U_{m-1}+1}}q^{j-1},c_{U_{m-1}+1}q^{j-1},\ldots,c_{\overline{U_{m}-1}}q^{j-1},c_{U_{m}}q^{j-1})}{(1-c_{U_{m-1}+1}q^{j-1})\cdots(1-c_{U_{m}}q^{j-1})}.
\label{eq:ovdus}
\end{align}
 
\end{theo}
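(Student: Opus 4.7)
My plan is to derive Theorem \ref{theo:pus} by adapting the proof of Theorem \ref{theo:pu} to the $S$-weight setting. The key observation is that, for any $\la \in \overline{\Pp}(U)$, the part in each singleton block is rigidly determined by the sorted version $\la^{\ast} \in \overline{\Pp}$ of $\la$: since $u_{n_i}=1$, block $n_i$ consists of a single part at position $U_{n_i}$, and block ordering forces this part to be exactly the $U_{n_i}$-th largest part of $\la$, namely $\la^{\ast}_{s_i}$. It follows that
$$|\la|_S=\sum_{i\geq 1}\la_{U_{n_i}}=\sum_{i\geq 1}\la^{\ast}_{s_i}=|\la^{\ast}|_S,$$
while permutations of the parts within the non-singleton blocks leave $|\la|_S$ unchanged and only affect the color-weighted arrangement count; moreover $C(\la)=C(\la^{\ast})$ since the two share the same conjugate.

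The next step is to regroup the factors of Theorem \ref{theo:pu} according to the singleton positions. In that theorem, each block $n$ contributes one factor in which the $q$-power attached to position $j\in (U_{n-1},U_n]$ is $q^j$. Passing from $|\la|$ to $|\la|_S$ amounts to replacing the exponent $j$ throughout by $\lfloor j\rfloor_S$. For $j\in (U_{m-1},U_m]$ lying in an intermediate block $m$ strictly between singletons $n_{i-1}$ and $n_i$, all such $j$ satisfy $s_{i-1}<j<s_i$, so $\lfloor j\rfloor_S=i-1$; for $j=U_{n_i}=s_i$, one has $\lfloor j\rfloor_S=i$. This produces the uniform $q^{i-1}$ factors in the intermediate-block Eulerian polynomial numerator and denominator, as well as the $q^i$ factor in the singleton term, so that \eqref{eq:ovpus} and \eqref{eq:pus} drop out of the corresponding formulas of Theorem \ref{theo:pu}.

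The main technical point is the flat case $\overline{\Dd}(U)$, namely \eqref{eq:ovdus}. Here the flat condition forces a strict drop between consecutive nonzero blocks, so the generating function must be decomposed according to which block is the last nonempty one. If the terminal block is the singleton $n_j$ then the corresponding factor becomes $(c_{n_j}+c_{\overline{n_j}})q^j/(1-c_{n_j}q^j)$ together with the product of complete factors for all earlier blocks; if instead the terminal block is an intermediate block at index $k\in (n_{i-1},n_i)$ then the Eulerian numerator for block $k$ must have its last variable specialized to $0$, so that no strictly smaller part is allowed to follow. Summing these two cases with the appropriate prefix of complete factors yields the double sum over $i$ and $k$ in \eqref{eq:ovdus}. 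The main obstacle will be to justify cleanly this $0$-specialization of the Eulerian polynomial at the terminal position and to ensure that the prefix of complete blocks is correctly assembled, exactly mirroring the treatment of \eqref{eq:ovd} in Theorem \ref{theo:main}.
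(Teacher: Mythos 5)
Your approach coincides with the paper's: the singleton blocks force $\tilde{\tilde{\la}}_{s_i}=\la_{s_i}$, hence $|\la|_S=|\tilde{\tilde{\la}}|_S$, and all three identities then follow from Theorem \ref{theo:pu} by the substitution $q^{j}\mapsto q^{\lfloor j\rfloor_S}$ in the $\Omega_U$ picture, together with the evaluation $\lfloor j\rfloor_S=i-1$ for $s_{i-1}\leq j<s_i$ and $\lfloor s_i\rfloor_S=i$. The one caution concerns your treatment of \eqref{eq:ovdus}: no fresh decomposition is required there, since the decomposition by the first incomplete block was already carried out in deriving \eqref{eq:ovdu} and the same substitution applies verbatim; moreover your case split mislabels the roles of the blocks --- in \eqref{eq:ovdus} the factor $(c_{n_j}+c_{\overline{n_j}})q^{j}/(1-c_{n_j}q^{j})$ is attached to the \emph{complete} singleton blocks $n_1,\ldots,n_{i-1}$ (forced occurrence of a part of size $s_j$), while the terminal block is always a non-singleton block $k$ whose last Eulerian variable is specialized to $0$, so there is no separate ``terminal singleton'' case.
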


For all $k,n\in \Zu$, set $U_{n,k}$ to be the sequence $U$ such that $u_{1+i(k+1)}=1$ and $u_{j+i(k+1)}=n$ for all $i\geq 0$ and $2\leq j\leq k+1$. For the sequence $U_{n,k}$, for all $i\geq 1$, set  $n_i=1+(i-1)(k+1)$, i.e. $U_{n_i} = 1+(i-1)(kn+1)$. Finally, set $c_{j+i(nk+1)}=c_{j}$ and $c_{\overline{j+i(nk+1)}}=c_{\overline{j}}$ for all $i\geq 0$ and $1\leq j\leq nk+1$. One can derive from Theorem \ref{theo:pus} the following result.

\begin{cor}
We have

\begin{align}
\sum_{\la\in \overline{\Pp}(U_{n,k})}C(\la)q^{|\la|_S}&= \frac{(-c_{\overline{1}}q;q)_\infty}{(c_{1}q;q)_\infty}\cdot \prod_{i=1}^k \frac{\prod_{j\geq 1} \overline{\E}_{n}(c_{\overline{2+(i-1)n}}q^{j},c_{2+(i-1)n}q^{j},\ldots,c_{\overline{in}}q^{j},c_{in}q^{j},c_{\overline{in+1}}q^{j},1)}{(c_{2+(i-1)n}q;q)_\infty \cdots (c_{1+in}q;q)_\infty},\\
\sum_{\la\in \Pp(U_{n,k})}C(\la)q^{|\la|_S}&= \frac{1}{(c_{1}q;q)_\infty}\cdot \prod_{i=1}^k \frac{\prod_{j\geq 1} \E_{n}(c_{2+(i-1)n}q^{j},\ldots,c_{in}q^{j})}{(c_{2+(i-1)n}q;q)_\infty \cdots (c_{1+in}q;q)_\infty},
\end{align}

\end{cor}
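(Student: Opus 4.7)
The plan is to derive both identities as direct specializations of Theorem~\ref{theo:pus} applied to $U=U_{n,k}$, exploiting two compatible periodicities: the structural periodicity (modulo $k+1$) of the block sizes $u_m$, and the periodicity (modulo $nk+1$) assigned to the color sequence $(c_j)_{j\geq 1}$.

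First I would describe the cycle structure explicitly. For each integer $\ell\geq 1$, the blocks indexed by $\{n_\ell,n_\ell+1,\ldots,n_{\ell+1}-1\}$ form one ``cycle'': the $1$-block at $m=n_\ell$ occupying the single position $s_\ell=U_{n_\ell}=1+(\ell-1)(nk+1)$, followed by $k$ blocks of size $n$ occupying positions $s_\ell+1,\ldots,s_\ell+kn$. Because $s_\ell\equiv 1\pmod{nk+1}$, the color periodicity pins the colors in each cycle uniformly: the $1$-block's position carries colors $c_1$ and $c_{\overline{1}}$, while the positions of the $j$-th $n$-block (for $1\leq j\leq k$) carry colors $c_{2+(j-1)n},\ldots,c_{1+jn}$ together with their overlined counterparts, independently of $\ell$.

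Next I would track how the $q$-exponents align. In Theorem~\ref{theo:pus}, the outer main factor at theorem-index $i$ produces exponent $q^i$ for the $1$-block at $n_i$, while the inner product at theorem-index $i+1$ produces exponent $q^i$ for each of the $n$-blocks of cycle $i$. Hence every block of cycle $\ell$ uniformly carries $q^\ell$. Substituting the colors identified above, the total contribution of cycle $\ell$ to \eqref{eq:ovpus} reads
\begin{equation*}
\frac{1+c_{\overline{1}}q^\ell}{1-c_{1}q^\ell}\cdot\prod_{j=1}^{k}\frac{\overline{\E}_n(c_{\overline{2+(j-1)n}}q^\ell,c_{2+(j-1)n}q^\ell,\ldots,c_{\overline{jn}}q^\ell,c_{jn}q^\ell,c_{\overline{jn+1}}q^\ell,1)}{(1-c_{2+(j-1)n}q^\ell)\cdots(1-c_{jn+1}q^\ell)},
\end{equation*}
and the analogous contribution built from $\E_n$ without the barred colors in \eqref{eq:pus}.

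Finally I would interchange the product over cycles $\ell\geq 1$ with the product over $j\in\{1,\ldots,k\}$. The $1$-block factors assemble into $(-c_{\overline{1}}q;q)_\infty/(c_{1}q;q)_\infty$, and for each fixed $j$ and each $r\in\{2+(j-1)n,\ldots,1+jn\}$ one has $\prod_{\ell\geq1}(1-c_{r}q^\ell)=(c_{r}q;q)_\infty$, producing exactly the advertised Pochhammer denominators. A renaming of dummy indices $(\ell,j)\mapsto(j,i)$ then reproduces the two identities of the corollary. No deep obstacle arises; the only care required is in matching Theorem~\ref{theo:pus}'s inner-block indexing $m=n_{i-1}+1,\ldots,n_i-1$ (which lags its $1$-block by one cycle) against the natural cycle decomposition of $U_{n,k}$, a matching the two periodicities render transparent.
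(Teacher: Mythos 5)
Your derivation is correct and is precisely the route the paper intends: the corollary is stated as a direct specialization of Theorem~\ref{theo:pus} to $U_{n,k}$, and your bookkeeping — every block of cycle $\ell$ carrying the exponent $q^\ell$ (the singleton via the outer factor at index $\ell$, the $n$-blocks via the inner product at index $\ell+1$), the colors pinned modulo $nk+1$ to $c_1$ and $c_{2+(j-1)n},\ldots,c_{1+jn}$, and the final interchange of products assembling the $q$-Pochhammer symbols — matches exactly what the specialization yields. Nothing further is needed.
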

In particular, for $n=2$, we have 
\begin{align*}
\sum_{\la\in \overline{\Pp}(U_{2,k})}C(\la)q^{|\la|_S}&= \frac{(-c_{\overline{1}}q;q)_\infty}{(c_{1}q;q)_\infty}\cdot \prod_{i=1}^k \frac{(-c_{\overline{2i+1}}q,-(c_{2i}+2c_{\overline{2i}})q;q)_\infty}{(c_{2i}q,c_{2i+1}q;q)_\infty},\\
\sum_{\la\in \Pp(U_{2,k})}C(\la)q^{|\la|_S}&= \frac{1}{(c_{1}q;q)_\infty}\cdot \prod_{i=1}^k \frac{(-c_{2i}q;q)_\infty}{(c_{2i}q,c_{2i+1}q;q)_\infty},
\end{align*}
and we retrieve Theorem \ref{theo:ap2} from the latest identity. For $n=3$ and $c_{2+3i}=c_{3+3i}$ and $c_{\overline{2+3i}}=c_{\overline{3+3i}}$ for all $0\leq i\leq k-1$, we have 
\begin{footnotesize}
\begin{align*}
\sum_{\la\in \overline{\Pp}(U_{3,k})}C(\la)q^{|\la|_S}&= \frac{(-c_{\overline{1}}q;q)_\infty}{(c_{1}q;q)_\infty}\prod_{i=1}^k \frac{(-c_{\overline{3i+1}}q,-((3+\sqrt{3})c_{\overline{3i}}+(2+\sqrt{3})c_{3i})q,-((3-\sqrt{3})c_{\overline{3i}}+(2-\sqrt{3})c_{3i})q;q)_\infty}{(c_{3i}q,c_{3i}q,c_{3i+1}q;q)_\infty},\\
\sum_{\la\in \Pp(U_{3,k})}C(\la)q^{|\la|_S}&= \frac{1}{(c_{1}q;q)_\infty}\cdot \prod_{i=1}^k \frac{(-(2+\sqrt{3})c_{3i}q,-(2-\sqrt{3})c_{3i}q;q)_\infty}{(c_{3i}q,c_{3i}q,c_{3i+1}q;q)_\infty}.\\
\end{align*}
\end{footnotesize}
The remainder of the paper is organized as follows. In section \ref{sec:conjsw}, we investigate the link between the conjugate and the $S$-weight and prove Theorem \ref{theo:main} and Theorem \ref{theo:unrestrictedmodm}. Then, in Section \ref{sec:flatreg}, using our previous work related to a generalization of the Glaisher theorem, we provide a proof of Theorem \ref{theo:restrictedmodm}. Finally, in Section \ref{sec:compsympu}, we discuss the connection between the block over-partitions and the symmetric group and show Theorems \ref{theo:pu} and \ref{theo:pus}.


\section{Conjugate and $S$-weight}\label{sec:conjsw}


We discuss in this section the link between the conjugate and the $S$-weight of an over-partition. 

\subsection{The setup}

We first establish that the conjugacy is an involution on $\overline{\Pp}$.
\begin{lem}\label{lem:conjugate}
The map $\la \mapsto \tilde{\la}$ describes an weight-preserving involution on $\overline{\Pp}$ and on $\Pp$. Furthermore, it induces a bijection between $\overline{\Dd}$ and $\overline{\F}$.
\end{lem}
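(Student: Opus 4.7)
The plan is to work directly from the identity $|\tilde{\la}_i| = \#\{j\geq 1 : |\la_j|\geq i\}$, which follows because $\la_j\geq \overline{i}$ in the order $0<\overline{1}<1<\cdots$ holds iff $|\la_j|\geq i$. This reduces the size component of conjugation to the classical partition involution, so the real work lies in managing the over-lining status.

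First I would verify $\tilde{\la}\in \overline{\Pp}$. The size sequence is automatically non-increasing; the only delicate comparison is $\tilde{\la}_i\geq \tilde{\la}_{i+1}$ when $|\tilde{\la}_i| = |\tilde{\la}_{i+1}|$, which occurs precisely when $\la$ contains neither $\overline{i}$ nor $i$. The absence of $\overline{i}$ forces $\tilde{\la}_i$ to be non-over-lined, hence to be the largest element of its size class, so $\tilde{\la}_i\geq \tilde{\la}_{i+1}$. Distinctness of the over-lined parts follows because two over-lined entries $\tilde{\la}_i = \tilde{\la}_j = \overline{k}$ with $i<j$ would force $\overline{i}\in \la$, but the presence of $\overline{i}$ already contributes at least $1$ to $|\tilde{\la}_i|-|\tilde{\la}_j|$, contradicting equality. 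Weight-preservation is then a Fubini swap: $\sum_i |\tilde{\la}_i| = \sum_i \#\{j : |\la_j|\geq i\} = \sum_j |\la_j|$.

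Next I would prove $\tilde{\tilde{\la}} = \la$. The equality of sizes reduces to the classical involution. For over-lining, $\tilde{\tilde{\la}}_k$ is over-lined iff some $\tilde{\la}_i$ equals $\overline{k}$, that is, iff $\overline{i}\in \la$ and $|\tilde{\la}_i|=k$. Writing $\la_m = \overline{i}$, the convention that the over-lined occurrence is the last occurrence of its size gives $\la_{m+1}<\overline{i}$ and hence $|\la_{m+1}|<i$; combined with $|\la_m|=i$ this yields $\{j : |\la_j|\geq i\} = \{1,\ldots,m\}$, i.e.\ $|\tilde{\la}_i|=m$. Therefore $\overline{k}\in \tilde{\la}$ iff $\la_k$ is over-lined, closing the involution. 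The restriction to $\Pp$ is then immediate, since if no $\overline{i}$ lies in $\la$ then no $\tilde{\la}_i$ is over-lined.

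Finally, for the bijection $\overline{\Dd}\leftrightarrow \overline{\F}$, I would start with $\la\in \overline{\Dd}$ of strictly decreasing over-lined sizes $a_1>\cdots>a_s>0$. Then $|\tilde{\la}_i|$ counts those $a_r\geq i$, so it drops by exactly $1$ precisely when $i\in \{a_1,\ldots,a_s\}$; but that set is also exactly the set of indices $i$ for which $\tilde{\la}_i$ is over-lined. This establishes the defining identity of $\overline{\F}$: $|\tilde{\la}_i| - |\tilde{\la}_{i+1}| = \chi(\tilde{\la}_i \text{ over-lined})$. The reverse inclusion follows by applying the same reasoning to $\tilde{\la}\in \overline{\F}$ and invoking the involution. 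The main subtlety throughout is cleanly exploiting the convention ``over-lined = last occurrence of its size'' in the involution step; once that structural fact is isolated, every other assertion reduces to ordinary partition conjugation together with straightforward bookkeeping of the over-lines.
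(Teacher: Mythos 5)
Your proposal is correct and takes essentially the same route as the paper's proof: both reduce the size data to classical conjugation via $|\tilde{\la}_i|=\sharp\{j\geq 1:|\la_j|\geq i\}$, handle the over-lines through the identities ``multiplicity of size $i$ in $\la$ equals $\tilde{\la}_i-\tilde{\la}_{i+1}$'' and ``$\overline{i}\in\tilde{\la}$ iff $\la_i$ is over-lined,'' and then derive the involution, the restriction to $\Pp$, and the $\overline{\Dd}\leftrightarrow\overline{\F}$ correspondence from these. The only difference is presentational (the paper telescopes the multiplicity identities to get $\tilde{\tilde{\la}}_i=\la_i$ and spells out the inclusion $\overline{\F}\to\overline{\Dd}$ that you compress into ``the same reasoning''), so the content matches.
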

\begin{proof}
We first remark that a part $i\in \Zo \sqcup \overline{\Zu}$ has size $\sharp\{0<\overline{j}\leq i\}$. Let $\la=(\la_i)_{i\geq 1}$ be a sequence of $\Ppp$. We then have that 
\begin{align*}
|\la|&=\sum_{i\geq 1} \la_i \\
&= \sharp\{i,j\in \Zu: \la_{i}\geq \overline{j}\} \\
&=\sum_{j\geq 1} \tilde{\la}_j\\
&=|\tilde{\la}|.
\end{align*}
The map $\la\mapsto \tilde{\la}$ is then weight-preserving. Also, by definition, for all $i\in \Zu$, 
$$\sharp\{j\geq 1: \la_{j}=i,\overline{i}\} = \tilde{\la}_{i}-\tilde{\la}_{i+1}.$$ 
Moreover, as there is at most one occurrence of $\overline{i}$ in $\la$, 
$$\chi(\tilde{\la}_{i}\text{ is over-lined}) = \chi(\overline{i}\in \la) = \sharp\{j\geq 1: \la_{j}=\overline{i}\}.$$
Hence,
$$\sharp\{j\geq 1: \la_{j}=i\}= \tilde{\la}_{i}-\tilde{\la}_{i+1}-\chi(\tilde{\la}_{i}\text{ is over-lined}).$$
The sequence $\tilde{\la}$ is non-increasing in terms of part size, and we have that $\tilde{\la}_i>\tilde{\la}_{i+1}$ as soon as $\tilde{\la}_i$ is over-lined. The over-lined parts are then distinct, and then non-over-lined occurrences of a positive integer appear before a possible over-lined occurrence. Therefore, $\tilde{\la}\in \overline{\Pp}$, and by considering the conjugate of $\tilde{\la}$, for all $i\geq 1$, we have that
\begin{equation}\label{eq:conjugate1}
\sharp\{j\geq 1: \tilde{\la}_{j}=i,\overline{i}\} = \tilde{\tilde{\la}}_{i}-\tilde{\tilde{\la}}_{i+1} \qquad\text{and}\qquad \chi(\overline{i}\in \tilde{\la})=\chi(\tilde{\tilde{\la}}_i\text{ is over-lined}).
\end{equation}
Suppose now that $\la$ is an over-partition, i.e. $(\la_j)_j$ is non-increasing. For all $i\geq 1$, and for all $\la_{i+1}<\overline{j}\leq \la_i$, the part $\tilde{\la}_j$ has size $i$. In addition, $\tilde{\la}_j$ is over-lined if and only if $\overline{j}\in \la$, which, since $\la$ is non-increasing and $\la_{i+1}<\overline{j}\leq \la_i$, is equivalent to saying that $\overline{j}= \la_i$. Moreover, for $\overline{j}>\la_{i}$, $\tilde{\la}_j$ has size less than $i$, and for $\overline{j}\leq\la_{i+1}$, $\tilde{\la}_j$ has size greater than $i$.
Hence, 
\begin{equation}\label{eq:conjugate2}
\sharp\{j\geq 1: \tilde{\la}_{j}=i,\overline{i}\} = \la_i-\la_{i+1} \qquad\text{and}\qquad \chi(\overline{i}\in \tilde{\la})=\chi(\la_i\text{ is over-lined}).
\end{equation}
We note that $\tilde{\tilde{\la}}_j=0$ for all $\overline{j}>\tilde{\la}_1$. We thus conclude that, for all $0\leq \overline{i}\leq \tilde{\la}_1$, 
\begin{align*}
\tilde{\tilde{\la}}_i &= \sum_{j=i}^{\tilde{\la}_1+0} \tilde{\tilde{\la}}_{j}-\tilde{\tilde{\la}}_{j+1}\\
&= \sum_{j=i}^{\tilde{\la}_1+0} \sharp\{k\geq 1: \tilde{\la}_{k}=j,\overline{j}\}&\text{(by \eqref{eq:conjugate1})}\\
&= \sum_{j=i}^{\tilde{\la}_1+0} \la_{j}-\la_{j+1}&\text{(by \eqref{eq:conjugate2})}\\
&= \la_i,
\end{align*} 
and $\tilde{\tilde{\la}}_i$ is over-lined if and only if $\la_i$. Then, $\tilde{\tilde{\la}}_i=\la_i$ for all $i\in\Zu$, and  the conjugacy is an involution on $\overline{\Pp}$.
 
To prove that $\la \mapsto \tilde{\la}$ induces involution on $\Pp$, it suffices to observe that 
 there are as many over-lined parts in $\la$ as in $\Tilde{\la}$, so that $\tilde{\la}\in \Pp$ if and only if $\la \in \Pp$. 
 
 It now remains to show that the sets $\overline{\Dd}$  and $\overline{\F}$ are in bijection through the conjugacy. If $\la\in \overline{\Dd}$, then $\tilde{\la}\in \overline{\F}$ by \eqref{eq:conjugate2}, as $\overline{i}\in \tilde{\la}$ for all $0<\overline{i}\leq \tilde{\la}_1$. If $\la\in \overline{\F}$, then $\tilde{\la}\in \overline{\Dd}$ by \eqref{eq:conjugate1} with $\la$ instead of $\tilde{\la}$, as $\tilde{\la}_i>\tilde{\la}_{i+1}$ and $\tilde{\la}_i$ is over-lined for all  $0<\overline{i}\leq \tilde{\la}_1$.
\end{proof}

\subsection{Proof of Theorem \ref{theo:main}}\label{sec:main}

The association $i\mapsto(\lfloor i \rfloor_S)_{c_i}$ is a bijection from $\Zo\sqcup \overline{\Zu}$ to colored parts, as $c_i$ can only color the integer $\lfloor i \rfloor_S$ for all $i \in \Zo\sqcup \overline{\Zu}$. Consider the function 
\begin{align}
\Psi_S \colon \overline{\Pp} &\to  \overline{\Pp}^S_\C\nonumber\\
(\la_i)_{i\geq 1} &\mapsto ((S_{\la_i})_{c_{\la_i}})_{i\geq 1},\label{eq:psis}
\end{align}
Then, $\Psi_S$ is a bijection, and it induces bijection from $Ens$ to $Ens_\C^S$ for all subset $Ens$ of $\overline{\Pp}$.
Hence, as $\Phi_S(\la)=\Psi_S(\tilde{\la})$ for all $\la\in \overline{\Pp}$, along with Lemma \ref{lem:conjugate}, we have that $\Phi_S$ is a bijection from $\overline{\Pp}$ to  $\overline{\Pp}^S_\C$. Moreover, by definition, for all $\la=(\la_i)_{i\geq 1}\in \overline{\Pp}$, $$C(\Phi_S(\la))=\prod_{0<\overline{i}\leq \la_1}c_{\tilde{\la}_i}=C(\la).$$
In addition, as $\la$ is non-increasing, for all $i,j\geq 1$, $\la_i\geq \overline{j}$ if and only if $\overline{i}\leq \tilde{\la}_j$. Therefore, 
\begin{align*}
|\la|_S&= \sum_{i\in S} \la_i\cdot s_i\\
&= \sum_{i\geq 1} \sharp\{j\geq 1: \overline{i}\leq \tilde{\la}_j\}\cdot s_i\\
&= \sum_{j\geq 1} \sum_{\overline{i}\leq \tilde{\la}_j} s_i\\
&=\sum_{j\geq 1} |\tilde{\la}_j| &\text{(by \eqref{eq:sgif})}\\
&= |\Phi_S(\la)|.
\end{align*}
Finally, by Lemma \ref{lem:conjugate},
\begin{align*}
\Phi_S(\Pp)&=\Psi_S(\tilde{\Pp})=\Psi_s(\Pp)=\Pp_\C^S,\\
\Phi_S(\overline{\F})&=\Psi_S(\tilde{\overline{\F}})=\Psi_s(\overline{\Dd})=\overline{\Dd}_\C^S,\\
\Phi_S(\overline{\Dd})&=\Psi_S(\tilde{\overline{\Dd}})=\Psi_s(\overline{\F})=\overline{\F}_\C^S.\\
\end{align*}
The right-sum of \eqref{eq:ovp} is the generating function $\sum_{\la\in \overline{\Pp}_\C^S} C(\la)q^{|\la|}$, since for all $i\in \Zu$, the colored part $(S_i)_{c_{\overline{i}}}$ appears at most once, and $(S_i)_{c_{i}}$ can be repeated as many times as possible. Hence, \eqref{eq:ovp} stands true.
The same reasoning occurs for \eqref{eq:p} and \eqref{eq:ovf}. 
For \eqref{eq:ovd}, we compute the generating function of $\overline{\F}_\C^S$ by reasoning according to the minimal $\overline{i}$ greater than $j_1$, where $c_{j_1}=c(\la_1)$.
For $i\geq 1$,
$$ 
\prod_{0<\overline{j}<\overline{i}}
\frac{c_{\overline{j}} q^{S_j}}{1-c_jq^{S_j}}$$
is the generating function of the colored partitions $\la=(\la_t)_{t\geq 1}$ in $\overline{\F}_\C^S$ such that $\overline{i-1}\leq j_1<\overline{i}$. When $i=0$, the  empty product equals $1$ by convention, and this concludes the proof of Theorem \ref{theo:main}.

\subsection{Proof of Theorem \ref{theo:unrestrictedmodm}}\label{sec:modm}

Suppose that $s_{i+jm}=s_i$, $c_{i+jm}=c_{i}$ and $c_{\overline{i+jm}}=c_{\overline{i}}$ for all $j\geq 0$ and $1\leq i\leq m$. Hence,
for all $1\leq i\leq m$, for all $j\geq 0$, we have that 
$$S_{i+jm}=S_i + j\cdot S_m,$$ 
so that the part $i+jm$ is associated to the colored part $(S_i + j\cdot S_m)_{c_{i+jm}}$, and $\overline{i+jm}$ is associated to the colored part $(S_i + j\cdot S_m)_{c_{\overline{i+jm}}}$. 
In addition, by \eqref{eq:conjugate2}, for all $i\geq 1$,
$$\sharp\{j\geq 1: \tilde{\la}_{j}=i\} = \la_i-\la_{i+1}-\chi(\la_i\text{ is over-lined}) \qquad\text{and}\qquad \chi(\overline{i}\in \tilde{\la})=\chi(\la_i\text{ is over-lined}),$$ 
so that for all $1\leq i\leq m$,
$$\rho_i(\la) = \sum_{j\geq 0} \la_{i+mj}-\la_{i+1+mj} -\chi(\la_{i+mj} \text{ is over-lined}) = \sharp\{j\geq 1: \tilde{\la}_{j} \text{ is non-over-lined}, \,\,\tilde{\la}_{j}\equiv i \mod m\},$$
$$\rho_{\overline{i}}(\la) = \sharp\{j\geq 0: \la_{i+mj} \text{ is over-lined}\}= \sharp\{j\geq 1: \tilde{\la}_{j} \text{ is over-lined}, \,\,\tilde{\la}_{j}\equiv i \mod m\}.$$
Hence, 
$$C(\la) = \prod_{j=1}^m c_j^{\rho_j(\la)}c_{\overline{j}}^{\rho_{\overline{j}}(\la)}$$
and we deduce \eqref{eq:ovpmodm}, \eqref{eq:pmodm}, \eqref{eq:ovfmodm} and \eqref{eq:ovdmodm} respectively from \eqref{eq:ovp}, \eqref{eq:p}, \eqref{eq:ovf} and \eqref{eq:ovd}.


\section{Flat partitions, Regular partitions, and generalized Glaisher}\label{sec:flatreg}

In this session, we prove Theorem \ref{theo:restrictedmodm} using a refinement of a result of \cite{K22} generalizing the Glaisher identity via weighted words. To do so, we first present the generalization of the Glaisher theorem, and then suitably apply it to our context to prove Theorem \ref{theo:restrictedmodm}. 

\subsection{The setup}
Let $\C$ be a set of color, and let $\ep$ be a function from $\C^2$ to $\{0,1\}$. Suppose that there exists a color $c_0 \in \C$ such that, $\ep(c_0,c_0)=0$, and for all $c_0\neq c \in \C$, 
$$\ep(c,c_0)=0=1-\ep(c_0,c).$$
Consider the set $\mathbb{Z}_\C= \{k_c:k\in \mathbb{Z},c\in \C\}$ of colored integers, and define the binary relations $\gtrdot_\ep$ and $\gg_\ep$ on $\mathbb{Z}_\C$ by 
\begin{align}
k_c\gtrdot_\ep l_d &\text{ if and only if } k-l=\ep(c,d),\label{eq:flat}\\
k_c\gg_\ep l_d &\text{ if and only if } k-l\geq \ep(c,d).\label{eq:reg}
\end{align}

\begin{deff}
Let $\F_{\C,c_0}^{\ep}$ be the set of sequences $(\pi_0,\ldots,\pi_s)$ of colored parts well-ordered by $\gtrdot_\ep$, and such that $\pi_s=0_{c_0}$ and $\pi_{s-1}\neq 0_{c_0}$ when $s>0$. Such sequences are called flat partitions grounded in $c_0$ and with relation $\gtrdot_\ep$.
Let $\R_{\C,c_0}^{\ep}$ be the set of sequences $(\pi_0,\ldots,\pi_s)$ of colored parts well-ordered by $\gg_\ep$, and such that $\pi_s=0_{c_0}$ and $c(\pi_i)\neq c_0$ for all $0\leq i<s$. Such sequences are called regular partitions in $c_0$, grounded in $c_0$ and with relation $\gg_\ep$. Both types of partitions belongs to $\Pp_{\C,c_0}^{\ep}$, defined as the set of colored partitions $(\pi_0,\ldots,\pi_s)$ well-ordered by $\gg_\ep$, and such that $\pi_s=0_{c_0}$ and $\pi_{s-1}\neq 0_{c_0}$. The latter set is called the set of colored partitions grounded in $c_0$ and with relation $\gg_\ep$. The weight, color sequence and the arm of $\pi=(\pi_0,\ldots,\pi_s)\in \Pp_{\C,c_0}^{\ep}$ are respectively defined by 
$$|\pi|=\sum_{i=0}^{s-1}|\pi| \text{ , } C(\pi)=\prod_{i=0}^{s-1}c(\pi_i)\text{ and }\rho(\pi)=\pi_0-\sum_{i=0}^{s-1}\ep(c(\pi_i),c(\pi_{i+1})).$$
The set $\F_{\C,c_0}^{\ep}$ then corresponds to the subset of $\Pp_{\C,c_0}^{\ep}$ consisting of partitions $\pi$ with arm $0$, and 
the set $\R_{\C,c_0}^{\ep}$ then corresponds to the subset of $\Pp_{\C,c_0}^{\ep}$ consisting of partitions $\pi$ such that $C(\pi)$ does not have $c_0$ as factor.
\end{deff}

The generalization of Glaisher's identity in terms of colored partitions is the following.
\begin{theo}\label{theo:konan}
Let $n,o$ be non-negative integers and let $C$ be a sequence of colors in $\C\setminus\{c_0\}$. Then, the number of flat partitions $\pi$ in $\F_{\C,c_0}^{\ep}$ with weight $n$, with $o$ positive parts of color $c_0$ and such that the color sequence without $c_0$ is $C$ is equal to the number of regular partitions $\pi$ in $\R_{\C,c_0}^{\ep}$ with weight $m$, color sequence $C$ and arm $o$.
In terms of generating function, we then have that
$$\sum_{\pi\in \F_{\C,c_0}^{\ep}} C(\pi)q^{|\pi|}= \sum_{\pi\in \R_{\C,c_0}^{\ep}} c_0^{\rho(\pi)}C(\pi)q^{|\pi|}.$$
\end{theo}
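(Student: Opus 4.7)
The plan is to prove the theorem by constructing a weight-preserving bijection $\Phi$ from $\F_{\C,c_0}^{\ep}$ to $\R_{\C,c_0}^{\ep}$ that preserves the color sequence (understood as the subsequence obtained after removing the $c_0$-colored parts on the flat side). Once such a bijection is exhibited, the two generating functions are equal term by term after the substitution $c_0 = 1$, which is exactly the claimed identity.

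First, I would fix a color sequence $C = (c_{i_1}, \ldots, c_{i_r})$ with every $c_{i_j} \in \C \setminus \{c_0\}$ and parametrize both sides. On the flat side, the constraints $\ep(c, c_0) = 0$ and $\ep(c_0, c) = 1$ for $c \neq c_0$ force every maximal run of $c_0$-parts to have constant size, with values rigidly determined by the adjacent non-$c_0$ parts. Consequently a flat partition with non-$c_0$ color sequence $C$ is completely determined by the numbers $a_1, \ldots, a_r$ of $c_0$-padding parts placed immediately above the $j$-th non-$c_0$ part; note that no padding is possible between the lowest non-$c_0$ part and the ground $0_{c_0}$, as this would force $\pi_{s-1} = 0_{c_0}$ and violate the definition of $\F_{\C,c_0}^{\ep}$. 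On the regular side, a regular partition with color sequence $C$ is determined by $r$ non-negative excess gaps $g_1, \ldots, g_r$ encoding the slack in each inequality $\pi_{j-1} \gg_\ep \pi_j$ and $\pi_{r-1} \gg_\ep 0_{c_0}$.

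The bijection $\Phi$ is then the natural identification $(a_1, \ldots, a_r) \leftrightarrow (g_1, \ldots, g_r)$, and the key verification is that $|\pi|$ transforms correctly under this identification. The main obstacle will be carrying out this weight-bookkeeping uniformly across all color sequences, because the size of each inserted $c_0$ depends on whether its slot lies above a non-$c_0$ pair whose $\ep$-value is $0$ or $1$, which in turn affects the sizes of every non-$c_0$ part above it. The cleanest way to organize the calculation is by induction on $r$: strip off the top non-$c_0$ part on both sides, observe that this stripping commutes with $\Phi$, and apply the inductive hypothesis to the shorter color sequence $(c_{i_2}, \ldots, c_{i_r})$. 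This directly adapts the bijective argument of \cite{K22}, of which Theorem~\ref{theo:konan} is the weighted-words refinement needed in the sequel.
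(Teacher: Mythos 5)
Your proposal targets a statement that the paper itself does not prove: Theorem~\ref{theo:konan} is imported as a known result from \cite{K22} (a long, separate bijective paper), and Section~\ref{sec:flatreg} only specializes it. So you are in effect sketching a new proof of an external theorem, and the sketch has a concrete flaw at its core. Your parametrizations are fine: for a fixed ordered non-$c_0$ color sequence $(d_1,\dots,d_r)$, a flat partition is determined by the padding multiplicities $(a_1,\dots,a_r)\in\Zo^r$ and a regular partition by the excess gaps $(g_1,\dots,g_r)\in\Zo^r$. But the ``natural identification'' $a_j\leftrightarrow g_j$ is \emph{not} weight-preserving, and no coordinatewise identification can be. Take $r=2$ with $\ep(d_1,d_2)=0$ and $(a_1,a_2)=(1,1)$: the flat partition is $(2_{c_0},1_{d_1},1_{c_0},0_{d_2},0_{c_0})$, of weight $4$, whereas the regular partition with $(g_1,g_2)=(1,1)$ is $(2_{d_1},1_{d_2},0_{c_0})$, of weight $3$. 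The structural mismatch is that incrementing $a_j$ adds one $c_0$-part whose size depends on the surrounding non-$c_0$ parts, while incrementing $g_j$ shifts the $j$ parts above position $j$ and always adds exactly $j$ to the weight; these statistics cannot be matched slot by slot.

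The inductive repair you gesture at does not close this gap. After stripping the top non-$c_0$ part and its padding, the weight removed on the flat side is $x_1+a_1(x_1+1)$ (where $x_1$ depends on the lower portion through $a_2$), so the generating function of the restored weight over the free parameter $a_1$ is $q^{x_1}/(1-q^{x_1+1})$, while on the regular side it is $q^{\ep(d_1,d_2)+x_2}/(1-q)$ over the free parameter $g_1$; these disagree whenever $x_1>0$, so ``stripping commutes with $\Phi$'' is false for any $\Phi$ that acts independently on the stripped object and the top slot. The identity does hold for each fixed ordered color sequence (one can check the full generating functions agree for $r\le 2$), but establishing it requires a genuinely global rearrangement of the parameters --- this is precisely the content of the bijection constructed in \cite{K22}, and ``directly adapting'' it cannot be waved at in one sentence. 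To make your write-up honest, either cite \cite{K22} as the paper does, or produce the actual bijection together with the weight bookkeeping you acknowledge is the main obstacle.
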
 We now adjust the above result to fit our purpose. Let $m$ be a positive integer.
In the following, we set $\C=\{c_{i},c_{\overline{i+1}}:i\in \{0,\ldots,m-1\}\}$, and order the colors as follows:
$$c_0<c_{\overline{1}}<c_1<c_{\overline{2}}<\cdots<c_{\overline{m-1}}<c_{m-1}<c_{\overline{m}}.$$
Let $\ep$ be the function on $\C^2$ defined by 
$$\ep(c,d)=\chi(c<d)+\chi(c=d \text{ with an over-lined index}).$$
The full scope of the matrix $M=(\ep(c_i,c_j))_{0\leq i,j\leq \overline{m}}$ is given by
$$\bordermatrix{
\text{}&0& \overline{1} &1&\cdots & \overline{m-1}&m-1&\overline{m}
\cr 0&0&1&1&\cdots&1&1&1
\cr \overline{1}&0&1&1&\cdots&1&1&1
\cr 1&0&0&0&\ddots&1&1&1
\cr \vdots&\vdots&\vdots&\ddots&\ddots&\ddots&\vdots&\vdots
\cr \overline{m-1}&0&0&0&\ddots &1&1&1
\cr m-1&0&0&0&\cdots&0&0&1
\cr \overline{m}&0&0&0&\cdots&0& 0&1
} .$$
Then, $\ep(c_0,c_0)=0$, and for all $c_0\neq c \in \C$, 
$$\ep(c,c_0)=0=1-\ep(c_0,c).$$
The relation $\gg_\ep$ also induces an order on colored parts
$$\cdots\gg_\ep 1_{c_0}\gg_\ep 1_{c_0}\gg_\ep 0_{c_{\overline{m}}}\gg_\ep 0_{c_{m-1}}\gg_\ep0_{c_{m-1}}\gg_\ep 0_{c_{\overline{m-1}}}\gg_\ep \cdots \gg_\ep 0_{c_{\overline{2}}}\gg_\ep 0_{c_1}\gg_\ep 0_{c_1}\gg_\ep 0_{c_{\overline{1}}}\gg_\ep 0_{c_0},$$
so that the parts with a color with an over-lined index are distinct. 
\[\]
Suppose that $s_{i+jm}=s_i$, $c_{i-1+jm}=c_{i-1}$ and $c_{\overline{i+jm}}=c_{\overline{i}}$ for all $j\geq 0$ and $1\leq i\leq m$. We then have that $$\C = \{c_0,c_{\overline{1}},c_1,\ldots,c_{\overline{m-1}},c_{m-1},c_{\overline{m}}\} = \{c_i,c_{\overline{i+1}}:i\in \Zo\}.$$
Define the function $\alpha_S$ on $(\Zo)_{\C}$ such that
\begin{align*}
j_{c_i} &\mapsto (jS_m+S_i)_{c_{jm+i}} &\text{ for all } 0\leq i\leq m-1 \\
j_{c_{\overline{i}}} &\mapsto (jS_m+S_i)_{c_{\overline{jm+i}}} &\text{ for all } 1\leq i\leq m.
\end{align*}
By abuse of notation, we extend $\alpha_S$ to the set $\Pp_{\C,c_0}^{\ep}$ with
\begin{align*}
\alpha_S \colon \Pp_{\C,c_0}^\ep &\to  \overline{\Pp}^S_\C\nonumber\\
(\pi_0,\ldots,\pi_s) &\mapsto (\la_i)_{i\geq 1}
\end{align*}
where $\la_{i+1}=\alpha_S(\pi_i)$ for all $0\leq i\leq s$ and $\la_i=0$ for all $i>s+1$. Hence, $\alpha_S$ describes a bijection from $\Pp_{\C,c_0}^{\ep}$ to $\overline{\Pp}_\C^S$. In terms of generating functions, 
it is equivalent to doing the transformations
\begin{align*}
q &\mapsto  q^{S_m}\\
c_i &\mapsto c_iq^{S_i} &\text{ for all } 0\leq i\leq m-1 \\
c_{\overline{i}} &\mapsto c_{\overline{i}}q^{S_i} &\text{ for all } 1\leq i\leq m.
\end{align*}
Then, by setting the arm of $\la \in \overline{\Pp}_\C^S$ to be the arm of $\alpha_S((\Psi_S^{-1}(\la))$ as in \eqref{eq:psis}, Theorem \ref{theo:konan} then stands true for the sets corresponding to $\F_{\C,c_0}^{\ep}$ and $\R_{\C,c_0}^{\ep}$. It is easy to see that $\alpha_S(\R_{\C,c_0}^{\ep})$ is the set of over-partitions with non-null parts not colored by $c_0$. For $\F_{\C,c_0}^{\ep}$, we have for integers $i,j$
\begin{align*}
k_{c_i}\gtrdot l_{c_j}&\Longleftrightarrow k-l =\chi(i<j)\\
&\Longleftrightarrow (i+km)-(j+lm)=i-j+m\chi(i<j),\\
k_{c_i}\gtrdot l_{c_{\overline{j}}}&\Longleftrightarrow k-l =\chi(i<j)\\
&\Longleftrightarrow (i+km)-(j+lm)=i-j+m\chi(i<j),\\
k_{c_{\overline{i}}}\gtrdot l_{c_{j}}&\Longleftrightarrow k-l =\chi(i<j)\\
&\Longleftrightarrow (i+km)-(j+lm)=i-j+m\chi(i<j),\\
k_{c_{\overline{i}}}\gtrdot l_{c_{\overline{j}}}&\Longleftrightarrow k-l =\chi(i\leq j)\\
&\Longleftrightarrow (i+km)-(j+lm)=i-j+m\chi(i\leq j).
\end{align*}
Hence, the set $\alpha_S(\F_{\C,c_0}^{\ep})$ consists of over-partitions $\la=(\la_i)_{i\geq 1}\in \overline{\Pp}_\C^S$  such that, by setting $\mu=(\mu_i)_{i\geq 1}= \Psi_S^{-1}(\la)$ as in \eqref{eq:psis}, we have $\mu_i -\mu_{i+1}\leq m-1+\chi(\mu_i \text{ is over-lined})$. 
We thus have the following corollary of Theorem \ref{theo:konan}.

\begin{cor}\label{cor:konan}
Let $m,o$ be non-negative integers and let $C$ be a sequence of colors in $\C\setminus\{c_0\}$. Then, the number of over-partitions $\la=(\la_i)_{i\geq 1}\in \overline{\Pp}_\C^S$  such that, by setting $\mu=(\mu_i)_{i\geq 1}= \Psi_S^{-1}(\la)$ as in \eqref{eq:psis}, we have $\mu_i-\mu_{i+1}\leq m-1+\chi(\mu_i \text{ is over-lined})$, with weight $m$, with $o$ non-null parts with color $c_0$ and such that the color sequence without $c_0$ is $C$, is equal to the number of over-partitions of $\overline{\Pp}_\C^S$ with the non-null parts not colored by $c_0$, with weight $m$, arm $o$ and color sequence $C$.
\end{cor}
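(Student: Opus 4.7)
The plan is to obtain Corollary \ref{cor:konan} by transporting Theorem \ref{theo:konan} through the bijection $\alpha_S : \Pp_{\C,c_0}^\ep \to \overline{\Pp}_\C^S$ constructed just above. All the structural work — the choice of $\C$ and $\ep$, the construction of $\alpha_S$, and the identification of the images of $\F_{\C,c_0}^\ep$ and $\R_{\C,c_0}^\ep$ inside $\overline{\Pp}_\C^S$ — has already been carried out in this section, so the corollary reduces to a routine unwinding of the generating-function identity provided by Theorem \ref{theo:konan}.

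Concretely, I would apply Theorem \ref{theo:konan} to the present $\C$ and $\ep$, giving
$$\sum_{\pi\in \F_{\C,c_0}^{\ep}} C(\pi) q^{|\pi|} = \sum_{\pi\in \R_{\C,c_0}^{\ep}} C(\pi) q^{|\pi|},$$
and then perform the substitution $q \mapsto q^t$, $c_i \mapsto c_i q^{v(i)}$, $c_{\overline{i}} \mapsto c_{\overline{i}} q^{v(i)}$ prescribed above, where $v(i)$ is determined by the block of $S$ in which $i$ lies. By design, each colored monomial $c q^j$ attached to a part $j_c$ of $\pi$ is replaced by $c q^{\lfloor i+jm\rfloor_S}$, which is exactly the monomial attached to the corresponding part of $\la=\alpha_S(\pi)$. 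The transformed identity therefore reads
$$\sum_{\la\in \alpha_S(\F_{\C,c_0}^{\ep})} C(\la) q^{|\la|} = \sum_{\la\in \alpha_S(\R_{\C,c_0}^{\ep})} C(\la) q^{|\la|},$$
and extracting the coefficient of a prescribed monomial $C q^m$ yields the claimed equality of counts.

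The one non-formal input is the explicit identification of the two images. For $\R_{\C,c_0}^\ep$, the regularity condition $c(\pi_i) \neq c_0$ on the positive parts of $\pi$ translates directly under $\alpha_S$ to ``the positive parts of $\la$ are not coloured by $c_0$''. For $\F_{\C,c_0}^\ep$, the four case computations of $k_{c_i} \gtrdot l_{c_j}$ given just before the statement show that the flatness equality $k - l = \ep(c,d)$ on consecutive colored parts of $\pi$ transports under $\alpha_S$ to the gap condition $\mu_i - \mu_{i+1} \leq m-1 + \chi(\mu_i \text{ is over-lined})$ on $\mu = \Psi_S^{-1}(\la)$. This is the main place to check carefully — in particular, the over-lined/over-lined case is the one where the gap $m$ can actually be attained, corresponding to $\ep(c_{\overline{i}},c_{\overline{i}}) = 1$ — and once it is in place the corollary follows immediately.
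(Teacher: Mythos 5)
Your proposal is correct and follows essentially the same route as the paper: transport Theorem \ref{theo:konan} through $\alpha_S$, identify $\alpha_S(\R_{\C,c_0}^{\ep})$ as the over-partitions with positive parts not coloured by $c_0$, and use the four-case computation of $\gtrdot_\ep$ to identify $\alpha_S(\F_{\C,c_0}^{\ep})$ as the set with the gap condition on $\mu=\Psi_S^{-1}(\la)$. You also correctly single out the over-lined/over-lined case as the one where the difference $m$ is attainable, which is exactly the point the paper's computation is designed to capture.
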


\subsection{Proof of Theorem \ref{theo:restrictedmodm}}\label{sec:rmodm}

Let $m$ be a positive integer. Recall that $\overline{\Pp}_m$ is the set of over-partitions with less than $m$ non-over-lined occurrences for all positive integers,
$\Pp_m$ is the set of partitions of $\overline{\Pp}_m$, and $\overline{\F}_m$ is the subset of $\overline{\Pp}_m$ consisting of over-partitions such that $\la_i-\la_{i+1}=\chi(\la_i \text{ is over-lined})$ for all $i$ not divisible by $m$.
The next lemma gives us a precise description of the image of $\overline{\Pp}_m,\Pp_m$ and $\overline{\Dd}_m$ via the conjugacy.

\begin{lem}\label{lem:conjugatemodm}
The map $\la\mapsto \tilde{\la}$ describes a bijection:
\begin{enumerate}
\item from $\overline{\Pp}_m$ to the set of over-partitions $(\mu_i)_{i\geq 1}$ such that $\mu_i-\mu_{i+1}\leq m-1+\chi(\mu_i \text{ is over-lined})$,
\item from $\Pp_m$ to the set of partitions $(\mu_i)_{i\geq 1}$ such that $\mu_i-\mu_{i+1}\leq m-1$,
\item from $\overline{\F}_m$ to the set of over-partitions $(\mu_i)_{i\geq 1}$ of $\overline{\Pp}_m$ such that the parts not divisible by $m$ are over-lined.
\end{enumerate} 
\end{lem}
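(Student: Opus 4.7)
The plan is to deduce all three bijections from a single translation identity obtained from \eqref{eq:conjugate2}: subtracting its two assertions shows that for every $\la\in\overline{\Pp}$ and every $i\geq 1$, the number of non-over-lined parts of size $i$ in $\tilde{\la}$ equals $\la_i-\la_{i+1}-\chi(\la_i\text{ is over-lined})$. By the involution of \Lem{lem:conjugate}, applying the same formula with the roles of $\la$ and $\tilde{\la}$ exchanged yields that the number of non-over-lined parts of size $i$ in $\la$ equals $\tilde{\la}_i-\tilde{\la}_{i+1}-\chi(\tilde{\la}_i\text{ is over-lined})$. I intend to read each of the three items off this single formula.

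For (1), the condition $\la\in\overline{\Pp}_m$ is, by definition, the statement that the latter count is at most $m-1$ for every $i$, which rearranges to $\tilde{\la}_i-\tilde{\la}_{i+1}\leq m-1+\chi(\tilde{\la}_i\text{ is over-lined})$. Combining this characterization with the involutive bijection $\la\mapsto\tilde{\la}$ on $\overline{\Pp}$ supplies (1). Item (2) is the same argument restricted to $\Pp$: by \Lem{lem:conjugate} the conjugation map bijects $\Pp$ with itself, and since $\tilde{\la}$ then carries no over-lined part the indicator $\chi(\tilde{\la}_i\text{ is over-lined})$ vanishes identically, so the gap condition collapses to $\mu_i-\mu_{i+1}\leq m-1$.

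For (3), the additional condition defining $\overline{\F}_m\subseteq\overline{\Pp}_m$ is $\la_i-\la_{i+1}=\chi(\la_i\text{ is over-lined})$ for every $i$ not divisible by $m$. By \eqref{eq:conjugate2} the left-hand side equals the total multiplicity of size $i$ in $\tilde{\la}$ and the right-hand side equals the number of over-lined parts of size $i$ in $\tilde{\la}$, so the equality asserts precisely that every part of $\tilde{\la}$ whose size is not divisible by $m$ is over-lined. Coupled with the gap bound already provided by (1) for the enclosing set $\overline{\Pp}_m$, these two properties together describe the image in (3); the reverse direction follows by running the same two computations on $\mu$ and invoking the involution to recover $\la=\tilde{\mu}\in\overline{\F}_m$.

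The argument is essentially a piece of bookkeeping: once the translation formula is in hand, each item is a one-line application, and the only care required is to apply both the gap bound from (1) and the over-lining constraint simultaneously when characterizing the image in (3). I therefore do not anticipate any substantial obstacle.
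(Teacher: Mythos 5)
Your argument is correct and is essentially the paper's own proof: both read the three items off the multiplicity identities \eqref{eq:conjugate1} and \eqref{eq:conjugate2} from the proof of Lemma \ref{lem:conjugate}, the paper merely stating the translation more tersely. Your interpretation of the target set in item (3) --- the gap bound of item (1) together with the over-lining constraint, rather than literal membership in $\overline{\Pp}_m$ --- is the right one (for instance $(4,\overline{4},0,\ldots)\in\overline{\F}_2$ conjugates to $(2,2,2,\overline{2},0,\ldots)\notin\overline{\Pp}_2$), and it is what the proof of Theorem \ref{theo:restrictedmodm} actually needs.
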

\begin{proof}
The facts (1) and (2) are straightforward, as by \eqref{eq:conjugate1}, 
$$\tilde{\la}_i-\tilde{\la}_{i+1} = \sharp\{j\geq 1:\la_j=i\}+\chi(\overline{i}\in \la)\text{ and }\chi(\overline{i}\in \la) =\chi(\tilde{\la}_i \text{ is over-lined}).$$
Similarly, the fact (3) derives from \eqref{eq:conjugate2}, as
$$\la_i-\la_{i+1} = \sharp\{j\geq 1:\tilde{\la}_j=i\}+\chi(\overline{i} \in \tilde{\la})\text{ and }\chi(\overline{i}\in \tilde{\la})=\chi(\la_i \text{ is over-lined})$$
and, in $\overline{\Pp}_m$, being in $\overline{\F}_m$ is equivalent to saying that $$\la_i-\la_{i+1}=\chi(\la_i \text{ is over-lined})$$ for all $i$ not divisible by $m$.
\end{proof}
For $c_m=c_0=1$, by using Corollary \ref{cor:konan} and Lemma \ref{lem:conjugatemodm}, there exists a bijection between
\begin{enumerate}
\item $\overline{\Pp}_m$ and the set of colored partitions of $\overline{\Pp}_\C^S$ with no positive multiple of $t$ colored by $c_m$,
\item $\Pp_m$ and the set of colored partitions of $\Pp_\C^S$ with no positive multiple of $t$ colored by $c_m$,
\item $\overline{\F}_m$ and the set of colored partitions of $\overline{\Pp}_\C^S$ with no part with non-over-lined index's color different from $0_{c_0}$.
\end{enumerate}
Moreover, for $\la$ maps to $\mu$, we have $C(\la)=C(\mu)$ and $|\la|_S = |\mu|$. This then yields to \eqref{eq:restrictedovpmodm},\eqref{eq:restrictedpmodm}, and \eqref{eq:restrictedovdmodm}.


\section{Symmetric group and block over-partitions}\label{sec:compsympu}


In \cite{St11}, Stanley discusses the link between the finite sequences of non-negative integers (compositions) and the symmetric group.  We here adjust this link to describe the block over-partitions and related it to the over-partitions.

\subsection{The setup}

Let $u$ be a positive integer. Let $\Ppp^{(u)}$ be the set of sequences of $u$ terms in $\Zo\sqcup \overline{\Zu}$ such that each over-lined part appears once and after the non-over-lined parts of the same size. Such a sequence is called \textit{over-composition}. Recall $\mathcal{S}_u$ the set of permutations of $\{1,\ldots,u\}$.  

\begin{deff}\label{def:sadmissible}
Let $\sigma$ be a permutation in $\mathcal{S}_u$. Let $\overline{\Pp}^{(u)}$ be the set of over-compositions of $\Ppp^{(u)}$ which are non-increasing.
We say that $\mu\in\overline{\Pp}^{(u)}$ is $\sigma$-admissible if $\mu_{i}-\mu_{i+1}>0$ for all $1\leq i\leq u-1$ such that $\sigma(i)>\sigma(i+1)$.
Let $\Sigma_u$ to be the set of pairs $(\sigma,\mu)\in \mathcal{S}_u\times \overline{\Pp}^{(u)}$ such that $\mu$ is $\sigma$-admissible.
\end{deff}

Let $\la=(\la_1,\ldots,\la_u)\in \Ppp^{(u)}$. There then exist a unique permutation $\sigma$ of $\{1,\ldots,u\}$ such that, for all $1\leq i<j\leq u$, 
\begin{equation}\label{eq:sadmissible}
\sigma^{-1}(i)<\sigma^{-1}(j)\text{ if and only if }\la_{i}-\la_{j}\geq 0.
\end{equation}
To be more precise, we have for all $1\leq i\leq u$ that
\begin{align}
\sigma^{-1}(i)&=\sharp\{j\leq i:\la_j-\la_i\geq 0\}+\sharp\{j>i:\la_j-\la_i>0\} \nonumber\\
&= \sharp\{j:\la_j-\la_i>0\}+\sharp\{j\leq i:\la_j-\la_i=0\},\label{eq:compsigma}
\end{align}
Hence, $(\la_{\sigma(1)},\ldots,\la_{\sigma(u)})$ is non-increasing in terms of size. Moreover, if $i<j$ and $\la_i-\la_j=0$, then $\sigma^{-1}(i)<\sigma^{-1}(j)$. Therefore, the fact that an over-lined part appears after the non-over-lined parts of the same size implies that  $(\la_{\sigma(1)},\ldots,\la_{\sigma(u)})\in \overline{\Pp}^{(u)}$.

\begin{lem}\label{lem:compsigma}
Define the map 
\begin{align*}
\Gamma_u \colon \overline{\Pp}^{(u)} &\to  \Sigma_u\\
(\la_1,\ldots,\la_u) &\mapsto (\sigma,(\la_{\sigma(1)},\ldots,\la_{\sigma(u)})),
\end{align*}
where $\sigma$ satisfies \eqref{eq:compsigma}.
The map $\Gamma_u$ is then well-defined, and it describes a bijection from $\Ppp^{(u)}$ to $\Sigma_u$. 
\end{lem}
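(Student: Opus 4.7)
The plan is to verify three properties of $\Gamma_u$ in turn: that it is well-defined, that its image lies in $\Sigma_u$, and that it is a bijection; the bijectivity will be handled by exhibiting an explicit inverse.

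First I would check that the permutation $\sigma \in S_u$ attached to $\la$ by \eqref{eq:sadmissible} exists and is unique. The explicit formula \eqref{eq:compsigma} writes $\sigma^{-1}(i)$ as a value in $\{1,\ldots,u\}$ that is manifestly injective in $i$: positions with distinct sizes of $\la_i$ are separated by the first count, while positions with equal sizes are separated by the tie-breaking second count, so $\sigma^{-1}$ is a permutation.

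Next I would confirm that $\mu := (\la_{\sigma(1)},\ldots,\la_{\sigma(u)})$ lies in $\overline{\Pp}^{(u)}$ and is $\sigma$-admissible. The sizes $|\mu_k|$ are non-increasing by construction, and inside a maximal block of equal sizes the tie-breaker in \eqref{eq:compsigma} lists the positions of $\la$ in increasing order. Since $\la \in \Ppp^{(u)}$ places any over-lined part after all non-over-lined parts of the same size, the unique over-lined part of such a block (if any) sits at the end of that block in $\mu$. This gives both the non-increasing order in $0 < \overline{1} < 1 < \overline{2} < \cdots$ and the over-lined-last convention required of an over-composition. Admissibility is then the contrapositive: whenever $|\mu_k| = |\mu_{k+1}|$, the tie-breaker forces $\sigma(k) < \sigma(k+1)$, so $\sigma(k) > \sigma(k+1)$ can occur only when $|\mu_k| > |\mu_{k+1}|$.

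For the bijection, I would define the reverse map $\Delta_u \colon (\sigma, \mu) \mapsto \la$ by $\la_{\sigma(i)} := \mu_i$. The composite $\Delta_u \circ \Gamma_u$ is the identity by construction, and $\Gamma_u \circ \Delta_u = \mathrm{id}$ amounts to checking that the permutation recovered from $\la$ via \eqref{eq:compsigma} is $\sigma$ itself, which uses admissibility only to ensure the tie-breaking orderings agree. The main obstacle I anticipate is verifying that $\Delta_u(\sigma, \mu)$ actually lies in $\Ppp^{(u)}$: given an over-lined $\la_a$ and a non-over-lined $\la_b$ of the same size, I must show $b < a$. Writing $i = \sigma^{-1}(a)$ and $j = \sigma^{-1}(b)$, both indices lie in a common maximal block of constant size in $\mu$, with $j < i$ since non-over-lined parts precede the over-lined one inside such a block. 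Applying $\sigma$-admissibility at each consecutive pair between $j$ and $i$, all of which have size-difference zero, yields $\sigma(j) < \sigma(j+1) < \cdots < \sigma(i)$, which telescopes to $b < a$. This telescoping step is the only place where admissibility is used in a genuinely essential way.
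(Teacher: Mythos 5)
Your proposal is correct and follows essentially the same route as the paper: well-definedness via the (contrapositive of the) admissibility condition, and bijectivity by constructing the explicit preimage $\la_{\sigma(i)}=\mu_i$, with the key step being the telescoping chain $\sigma(j)<\sigma(j+1)<\cdots<\sigma(i)$ across a constant-size block to place the over-lined part last. The paper phrases this as injectivity plus surjectivity rather than as an explicit two-sided inverse, but the content is identical.
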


\begin{proof}
Let $\la\in \Ppp^{(u)}$. Then, $\sigma$ is such that $(\la_{\sigma(1)},\ldots,\la_{\sigma(u)})\in \overline{\Pp}^{(u)}$. Let $1\leq t\leq u-1$, and suppose that $\sigma(t)>\sigma(t+1)$. Then, applying \eqref{eq:sadmissible} with $i=\sigma(t+1)$ and $j=\sigma(t)$, we obtain that $$\sigma^{-1}(i)=t+1>t=\sigma^{-1}(j),$$ and $\la_{\sigma(t+1)}-\la_{\sigma(t)}=\la_{j}-\la_{i}<0$.
Hence, $(\la_{\sigma(1)},\ldots,\la_{\sigma(u)})$ satisfies that
$\la_{\sigma(i)}-\la_{\sigma(i+1)}>0$ for all $1\leq i\leq u-1$ such that $\sigma(i)>\sigma(i+1)$. Therefore, $(\la_{\sigma(1)},\ldots,\la_{\sigma(u)})$ is $\sigma$-admissible and $\Gamma_u$ is well-defined.

It is straightforward to see that $\Gamma_u$ is injective, as a pair $(\sigma,(\mu_1,\ldots,\mu_u))$ can only have as pre-image
$(\mu_{\sigma^{-1}(1)},\ldots,\mu_{\sigma^{-1}(u)})$. It then remains to show that $\Gamma_u$ is surjective. Let $(\sigma,(\mu_1,\ldots,\mu_u))\in \Sigma_u$. Note that for all $1\leq i\leq u-1$,
$\mu_i-\mu_{i+1}\geq 0$. Hence, if
$\mu_i-\mu_{i+1}=0$, then $\sigma(i)<\sigma(i+1)$. For $1\leq i<j\leq u$, if $\mu_j-\mu_i=0$, then $\mu_{t}-\mu_{t+1}=0$ for all $i\leq t<j$ and 
$$\sigma(i)<\sigma(i+1)<\cdots<\sigma(j)$$ so that $\sigma(i)<\sigma(j)$.
Thus, for all $1\leq j\leq u$,
$$\sharp\{i: \sigma(i)\leq \sigma(j), \mu_i-\mu_j=0\} = \sharp\{i\leq  j: \mu_i-\mu_j=0\}.$$
An over-lined part occurs after the non-over-lined parts of same size in $(\mu_1,\ldots,\mu_u)$, so that it is also the case in $(\mu_{\sigma^{-1}(1)},\ldots,\mu_{\sigma^{-1}(u)})$. Hence, $(\mu_{\sigma^{-1}(1)},\ldots,\mu_{\sigma^{-1}(u)})\in \Ppp^{(u)}$.
Finally, for all $1\leq i\leq u$, 
\begin{align*}
\sharp\{j:\mu_{\sigma^{-1}(j)}-\mu_{\sigma^{-1}(i)}>0\}+\sharp\{j\leq i:\mu_{\sigma^{-1}(j)}-\mu_{\sigma^{-1}(i)}=0\}&=\sharp\{j:\mu_{j}-\mu_{\sigma^{-1}(i)}>0\}\\
&\qquad+\sharp\{j:\sigma(j)\leq i, \,\mu_{j}-\mu_{\sigma^{-1}(i)}=0\}\\
&=\sharp\{j:\mu_{j}-\mu_{\sigma^{-1}(i)}>0\}\\
&\qquad+\sharp\{j\leq\sigma^{-1}(i) :\mu_{j}-\mu_{\sigma^{-1}(i)}=0\}\\
&=\sharp\{j\leq \sigma^{-1}(i):\mu_{j}- \mu_{\sigma^{-1}(i)}\geq 0\}\\
&=\sigma^{-1}(i).
\end{align*}
By \eqref{eq:compsigma}, $\Gamma_u((\mu_{\sigma^{-1}(1)},\ldots,\mu_{\sigma^{-1}(u)}))=(\sigma,(\mu_1,\ldots,\mu_u))$, and we conclude. 
\end{proof}

We here adopt the aforementioned tool to state a result analogous to Lemma \ref{lem:compsigma} for the block over-partitions. Recall that $U=(u_n)_{n\geq 1}$ is a sequence of positive integers and that $U_n = \sum_{i=1}^n u_n$ for all $n\geq 0$. 

\begin{deff}\label{eq:Sadmissible}
For a positive integer $u$, let $e_u$ be the identity permutation of $\{1,\ldots,u\}$.
Let $\Sigma=(\sigma_i)_{i\geq 1} \in \prod_{i\geq 1} \mathcal{S}_{u_i}$ be  a sequence of permutations ultimately equal to the sequence $(e_{u_i})_{i\geq 1}$.
Let $\la =(\la_j)_{j\geq 1}\in \overline{\Pp}$. 
We say that $\la$ is $\Sigma$-admissible if $(\la_{U_{i-1}+1},\ldots,\la_{U_i})\in \overline{\Pp}^{(u)}$ is $\sigma_i$-admissible for all $i\geq 1$, i.e. $\mu_{j}-\mu_{j+1}>0$ for all $U_{i+1}+1\leq j< U_i$ such that $\sigma_i(j-U_{i-1})>\Sigma(j+1-U_{i-1})$.
Let $\Sigma_U$ to be the set of pairs $(\Sigma,\la)$ such that $\mu$ is $\Sigma$-admissible.
\end{deff}

For all $\Sigma=(\sigma_i)_{i\geq 1} \in \prod_{i\geq 1} \mathcal{S}_{u_i}$ ultimately equal to the sequence $(e_{u_i})_{i\geq 1}$, set for all $i\geq 1$ and for all $U_{i-1}+1\leq j\leq U_i$
$$\Sigma(j) = U_{i-1}+\sigma_i(j-U_{i-1}).$$
Hence, for all $i\geq 1$ and for all $U_{i+1}+1\leq j< U_i$, $\sigma_i(j-U_{i-1})>\sigma_i(j+1-U_{i-1})$ if and only if $\Sigma(j)>\Sigma(j+1)$. Moreover, for all $i\geq 1$, 
$$\Sigma(U_i)=U_{i-1}+\sigma_i(u_i)\leq U_i<U_i+1\leq U_i+\sigma_{i+1}(1)=\Sigma(U_i+1).$$
Therefore, for all $\la\in \overline{\Pp}$, 
\begin{equation}\label{eq:equivalencesigma}
\la \text{ is }\Sigma\text{-admissible if and only }\la_j-\la_{j+1}>0\text{ for all }j\geq 1 \text{ such that }\Sigma(j)>\Sigma(j+1).
\end{equation}
We then have the following.
\begin{lem}\label{lem:compSigma}
Define the map 
\begin{align*}
\Gamma_U \colon \overline{\Pp}(U) &\to \Sigma_U\\
\la &\mapsto (\Sigma,\tilde{\tilde{\la}}),
\end{align*}
where $\Sigma=(\sigma_i)_{i\geq 1}$ such that, for all $i\geq 1$,
$$\Gamma_{u_i}((\la_{U_{i-1}+1},\ldots,\la_{U_i}))=(\sigma_i, \la_{U_{i-1}+\sigma_i(1)},\ldots,\la_{U_{i-1}+\sigma_i(u_i)})).$$
The map $\Gamma_U$ is then well-defined, and it describes a bijection from $\overline{\Pp}(U)$ to $\Sigma_U$. 
\end{lem}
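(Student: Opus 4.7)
The plan is to reduce the statement to a block-by-block application of Lemma~\ref{lem:compsigma}, then verify that the global conditions glue correctly. I proceed in two stages: well-definedness of $\Gamma_U$ and bijectivity.

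For well-definedness, fix $\la\in\overline{\Pp}(U)$. First I would argue that each block $(\la_{U_{i-1}+1},\ldots,\la_{U_i})$ belongs to $\Ppp^{(u_i)}$: since over-lined parts of $\la$ are globally distinct and mark last occurrences of their size, within any block containing $\overline{j}$ no non-over-lined $j$ can appear after $\overline{j}$. Hence Lemma~\ref{lem:compsigma} applies to each block and yields a pair $(\sigma_i,\mu^{(i)})\in\Sigma_{u_i}$. Because $\la$ has only finitely many positive terms, all sufficiently late blocks are identically $0$, so $\sigma_i=e_{u_i}$ eventually, as required by Definition~\ref{eq:Sadmissible}. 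I would then show that the concatenation $\mu$ of the sorted blocks $\mu^{(i)}$ lies in $\overline{\Pp}$: each $\mu^{(i)}$ is non-increasing by Lemma~\ref{lem:compsigma}; across block boundaries, sorting preserves each block's min and max, so the defining inequality $\min\{\la_{U_{i-1}+1},\ldots,\la_{U_i}\}\ge\max\{\la_{U_i+1},\ldots,\la_{U_{i+1}}\}$ becomes $\mu^{(i)}_{u_i}\ge\mu^{(i+1)}_1$, giving global non-increase; over-lined parts remain distinct; and the ``last occurrence may be over-lined'' property transfers from $\la$ to $\mu$ thanks to the block-dominance structure, which forces all copies of a given size to sit inside one contiguous group of blocks. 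Finally, the $\Sigma$-admissibility of $\mu$ follows from \eqref{eq:equivalencesigma}: within each block, the condition is the $\sigma_i$-admissibility of $\mu^{(i)}$, and at every block boundary one has $\Sigma(U_i)\le U_i<U_i+1\le\Sigma(U_i+1)$, so no descent of $\Sigma$ occurs across boundaries.

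For bijectivity I would use Lemma~\ref{lem:compsigma} block by block. Injectivity is immediate: given $\Gamma_U(\la)=(\Sigma,\mu)$, each pair $(\sigma_i,(\mu_{U_{i-1}+1},\ldots,\mu_{U_i}))$ is determined, and by the injectivity of $\Gamma_{u_i}$ the unsorted block, hence $\la$, is recovered uniquely. For surjectivity, given $(\Sigma,\mu)\in\Sigma_U$, I would apply $\Gamma_{u_i}^{-1}$ to each pair $(\sigma_i,(\mu_{U_{i-1}+1},\ldots,\mu_{U_i}))$ and concatenate the resulting blocks, producing a sequence $\la\in\Ppp$. The block condition for membership in $\overline{\Pp}(U)$ then reduces, by the invariance of min and max under permutation of a block, to $\mu^{(i)}_{u_i}\ge\mu^{(i+1)}_1$, which is automatic because $\mu$ is a non-increasing over-partition.

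The main obstacle I anticipate is the bookkeeping around over-lined parts: one must check that the concatenation $\mu$ genuinely lies in $\overline{\Pp}$, i.e.\ that the over-lined occurrence of each size sits at the rightmost position where that size appears globally, and dually that the inverse construction does not create a $\Ppp$-violation inside any block. Both points are handled by the observation that the block-dominance ordering confines every size to a contiguous interval of blocks, so the unique possibly over-lined occurrence of a given size is always in the last such block and is placed correctly by the local within-block sorting.
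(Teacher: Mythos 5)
Your proof follows the same block-by-block strategy as the paper's: apply Lemma~\ref{lem:compsigma} to each block, note that the identity permutation occurs for all sufficiently late (all-zero) blocks, glue the sorted blocks using the preservation of each block's minimum and maximum, and invert blockwise for surjectivity. Your handling of the over-lined parts (each size confined to a contiguous run of blocks, with the over-lined occurrence forced into the last such block and positioned correctly by the within-block sorting) is also correct and is exactly what is needed.

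There is, however, one missing step. What you actually prove is that the map $\la\mapsto(\Sigma,\mu)$ is a bijection from $\overline{\Pp}(U)$ to $\Sigma_U$, where $\mu$ is the concatenation of the sorted blocks $\mu^{(i)}$. But the lemma defines $\Gamma_U(\la)=(\Sigma,\tilde{\tilde{\la}})$, with second component the \emph{double conjugate} of $\la$, and nowhere in your argument do you identify $\mu$ with $\tilde{\tilde{\la}}$. The paper closes this gap explicitly: it first establishes the bijectivity of the auxiliary map $\Lambda_U:\la\mapsto(\Sigma,\mu)$ essentially as you do, and then observes that $\mu$ is the rearrangement of the parts of $\la$ into globally non-increasing order, so that $\tilde{\la}_i=\sharp\{j\geq 1:\la_j\geq\overline{i}\}=\sharp\{j\geq 1:\mu_j\geq\overline{i}\}=\tilde{\mu}_i$ for all $i$, whence $\tilde{\la}=\tilde{\mu}$; since $\mu\in\overline{\Pp}$, Lemma~\ref{lem:conjugate} gives $\mu=\tilde{\tilde{\mu}}=\tilde{\tilde{\la}}$, and therefore $\Lambda_U=\Gamma_U$. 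This identification is short but not optional: without it you have proved the bijectivity of a map that is not, on its face, the one in the statement, and the identity $\mu=\tilde{\tilde{\la}}$ is precisely what the subsequent proofs of Theorems~\ref{theo:pu} and~\ref{theo:pus} rely on when they pass from $\overline{\Pp}(U)$ to $\Omega_U$ via conjugation. You should append this final observation to your argument; everything else is in order.
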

\begin{proof}
Consider the map 
\begin{align*}
\Lambda_U \colon \overline{\Pp}(U) &\to \Sigma_U\\
\la &\mapsto (\Sigma,\mu),
\end{align*}
where $\Sigma=(\sigma_i)_{i\geq 1}$ and $\mu=(\mu_i)_{i\geq 1}$ are such that, for all $i\geq 1$,
$$\Gamma_{u_i}((\la_{U_{i-1}+1},\ldots,\la_{U_i}))=(\sigma_i,(\mu_{U_{i-1}+1},\ldots,\mu_{U_i})).$$
Note the map $\Lambda_U$ is injective as $\Gamma_{u_i}$ is injective for all $i\geq 1$. Since $\la$ is a finite number of positive terms and $(U_j)_{j\geq 1}$ is an increasing sequence of positive integers, there exists $\ell\geq 1$ such that $\la_j=0$ for all $j>U_{\ell-1}$. Hence, for all $j\geq \ell$,
$$(\la_{U_{j-1}+1},\ldots,\la_{U_j})=(\mu_{U_{j-1}+1},\ldots,\mu_{U_j})=(0,\ldots,0).$$
and $\sigma_j=e_{u_j}$. Moreover, by Lemma \ref{lem:compsigma},
for all $i\geq 1$,
$(\mu_{U_{i-1}+1},\ldots,\mu_{U_i})$ is non-increasing and $\sigma_i$-admissible, 
$$\max\{\la_{U_{i-1}+1},\ldots,\la_{U_i}\}=\mu_{U_{i-1}+1}$$
and
$$\min\{\la_{U_{i-1}+1},\ldots,\la_{U_i}\}=\mu_{U_i}.$$
Hence, as $\la\in\overline{\Pp}(U)$, we then have that $\mu_{U_{i}}\geq \mu_{U_{i}+1}$ for all $i\geq 1$. Along with the fact that the over-lined parts are distinct, we deduce that  $\mu$ is an over-partition and $(\Sigma,\mu)\in \Sigma_U$. Inversely, for $(\Sigma,\mu)\in \Sigma_U$, since $\Gamma_{u_i}$ is surjective for all $i\geq 1$, one can define 
$\la=(\la_i)_{i\geq 1}$ such that 
$$\Gamma_{u_i}((\la_{U_{i-1}+1},\ldots,\la_{U_i}))=(\sigma_i,(\mu_{U_{i-1}+1},\ldots,\mu_{U_i})).$$
By reversing the previous reasoning, we have that $\la$ has a finite number of positive terms, and 
$$\min\{\la_{U_{i-1}+1},\ldots,\la_{U_i}\}\geq \max\{\la_{U_{i}+1},\ldots,\la_{U_{i+1}}\}$$
so that $\la \in \overline{\Pp}(U)$.
Hence, $\Lambda_U$ describes a bijection from $\overline{\Pp}(U)$ to $\Sigma_U$.

Now, observe that $\mu$ is obtained by ordering the parts of $\la$ in non-increasing order. Then, for all $i\geq 1$,
$$\tilde{\la}_i = \sharp\{j\geq 1: \la_j\geq \overline{i}\} = \sharp\{j\geq 1: \mu_j\geq \overline{i}\} = \tilde{\mu}_i$$
and $\tilde{\la}=\tilde{\mu}$. Since $\mu$ is an over-partition, we then have  by Lemma \ref{lem:conjugate} that $\mu = \tilde{\tilde{\mu}}=\tilde{\tilde{\la}}$.
Therefore, $\Gamma_U=\Lambda_U$ and we conclude.
\end{proof}
We conclude this session by defining a notion equivalent to the $\Sigma$-admissibility.
\begin{deff}
Let $\Sigma=(\sigma_i)_{i\geq 1} \in \prod_{i\geq 1} \mathcal{S}_{u_i}$ be  a sequence of permutations ultimately equal to the sequence $(e_{u_i})_{i\geq 1}$, and let $\la$ be an over-partition. We say that $\la$ is $\Sigma$-sizable if $\la$ has a part of size $i$ for all $i\geq 1$ such that $\Sigma(i)>\Sigma(i+1)$.
Let $\Omega_U$ be the set of pairs $(\Sigma,\la)$ such that $\la$ is $\Sigma$-sizable.
\end{deff}
We then have the following equivalence.
\begin{lem}\label{lem:compOmega}
The map 
\begin{align*}
\Sigma_U &\to \Omega_U\\
(\Sigma,\la) &\mapsto (\Sigma,\tilde{\la})
\end{align*}
\end{lem}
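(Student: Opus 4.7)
The plan is to reduce this claim to the conjugacy involution on $\overline{\Pp}$. Since the $\Sigma$-component is preserved and $\la\mapsto\tilde{\la}$ is an involution on over-partitions by Lemma \ref{lem:conjugate}, the map $(\Sigma,\la)\mapsto(\Sigma,\tilde{\la})$ is automatically an involution on pairs of the form (sequence of permutations, over-partition). Consequently, once we verify that admissibility of $\la$ corresponds to sizability of $\tilde{\la}$ (with the same $\Sigma$), the bijectivity between $\Sigma_U$ and $\Omega_U$ follows for free, with the inverse given by the same formula.

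First I would invoke the reformulation \eqref{eq:equivalencesigma}, which rewrites $\Sigma$-admissibility of $\la$ as the condition that $\la_j-\la_{j+1}>0$ for every $j\geq 1$ such that $\Sigma(j)>\Sigma(j+1)$; here the inequality $>0$ is understood in the paper's convention $\overline{i}\pm\overline{k}=i\pm k$, so it really means a strict drop in size between consecutive parts. The decisive bridge between $\la$ and $\tilde{\la}$ is equation \eqref{eq:conjugate2}, which, applied to the over-partition $\la$, says
\[
\sharp\{k\geq 1 : \tilde{\la}_k \text{ has size } j\} \;=\; \la_j-\la_{j+1}
\]
for every $j\geq 1$. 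This identity immediately yields the key equivalence
\[
\la_j-\la_{j+1}>0 \iff \tilde{\la} \text{ has a part of size } j.
\]

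Substituting this equivalence into the admissibility condition translates it term by term into the sizability condition: $(\Sigma,\la)\in\Sigma_U$ iff for every $j$ with $\Sigma(j)>\Sigma(j+1)$ the over-partition $\tilde{\la}$ possesses a part of size $j$, which by definition says $(\Sigma,\tilde{\la})\in\Omega_U$. Combined with $\tilde{\tilde{\la}}=\la$ from Lemma \ref{lem:conjugate}, which supplies both the well-definedness of the inverse recipe $(\Sigma,\mu)\mapsto(\Sigma,\tilde{\mu})$ and its mutual inversion with the original map, this establishes the asserted bijection. I anticipate no serious obstacle; the only point requiring care is the bookkeeping between ``a part of size $j$'' (which in $\tilde{\la}$ may be either $j$ or $\overline{j}$) and the arithmetic of sizes, so that \eqref{eq:conjugate2} is applied in its intended sense.
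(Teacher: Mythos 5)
Your argument is correct and follows essentially the same route as the paper: the key step in both is equation \eqref{eq:conjugate2}, which gives $\la_j-\la_{j+1}>0$ if and only if $\tilde{\la}$ has a part of size $j$, so that $\Sigma$-admissibility of $\la$ (in the form \eqref{eq:equivalencesigma}) translates directly into $\Sigma$-sizability of $\tilde{\la}$. Your additional remarks on the involution $\tilde{\tilde{\la}}=\la$ from Lemma \ref{lem:conjugate} just make explicit the bijectivity that the paper leaves implicit.
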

is a bijection.
\begin{proof}
By \eqref{eq:conjugate2}, 
$\la_{i}-\la_{i+1}>0$  if and only if $i \text{ or }\overline{i} \in \tilde{\la}$ and we conclude.
\end{proof}

\subsection{Proof of Theorem \ref{theo:pu}}\label{sec:pu}

From Lemmas \ref{lem:compSigma} and \ref{lem:compOmega}, the map $\la \mapsto (\Sigma,\tilde{\la})$ from $\overline{\Pp}(U)$ to $\Omega_U$, with $(\Sigma,\tilde{\tilde{\la}}) = \Gamma_U(\la)$, is a bijection. Hence,
$$\sum_{\la\in \overline{P}(U)} C(\la)q^{|\la|} = \sum_{(\Sigma,\la)\in \Omega_U} (\prod_{\substack{i\geq 1\\ \la_i>0}}c_{\la_i})q^{|\la|}.$$
For a fixed $\Sigma=(\sigma_i)_{i\geq 1}$
ultimately equal to $(e_{u_i})_{i\geq 1}$, we have that
$$\sum_{(\Sigma,\la)\in \Omega_U} (\prod_{\substack{i\geq 1\\ \la_i>0}}c_{\la_i})q^{|\la|} = \prod_{i\geq 1} \frac{(c_{i}q^i)^{\chi(\Sigma(i)>\Sigma(i+1)}+c_{\overline{i}}q^i)}{1-c_{i}q^i},$$
the generating function of parts of size $i$
being $\frac{1+c_{\overline{i}}q^i}{1-c_{i}q^i}$ when unrestricted, and $\frac{c_{i}q^i+c_{\overline{i}}q^i}{1-c_{i}q^i}$ when there is at least one part of size $i$. For $\ell\geq 1$, we now sum the latter equality over the sequences of permutations $\Sigma=(\sigma_i)_{i\geq 1}$ such that $\sigma_i=e_{u_i}$ for all $i\geq\ell$. Then, by \eqref{eq:equivalencesigma},
\begin{align*}
\sum_{\substack{\Sigma\\
\sigma_i=e_{u_i}\\ \forall \,i\geq \ell}}\sum_{(\Sigma,\la)\in \Omega_U} (\prod_{\substack{i\geq 1\\ \la_i>0}}c_{\la_i})q^{|\la|} &= \prod_{i\geq 1} \frac{1+(c_{\overline{i}}q^i)^{\chi(i>U_{\ell-1})}}{1-c_{i}q^i}\sum_{\substack{\Sigma\\
\sigma_i=e_{u_i}\\ \forall \,i\geq \ell}} \prod_{i=1}^{\ell-1}\prod_{j=U_{i-1}+1}^{U_i}((c_{j}q^j)^{\chi(\Sigma(j)>\Sigma(j+1)}+c_{\overline{j}}q^j)\\
&= \prod_{i\geq 1} \frac{1+(c_{\overline{i}}q^i)^{\chi(i>U_{\ell-1})}}{1-c_{i}q^i}\\
&\quad\cdot \sum_{\sigma_1,\ldots,\sigma_{\ell-1}} \prod_{i=1}^{\ell-1}(1+c_{U_i}q^i)\prod_{j=U_{i-1}+1}^{U_i-1}((c_{j}q^j)^{\chi(\sigma_i(j-U_{i-1})>\sigma_j(j+1-U_{i-1})}+c_{\overline{j}}q^j)\\
&= \prod_{i\geq 1} \frac{1+(c_{\overline{i}}q^i)^{\chi(i>U_{\ell-1})}}{1-c_{i}q^i}\\
&\quad\cdot \prod_{i=1}^{\ell-1} (1+c_{U_i}q^i)\sum_{\sigma_i\in \mathcal{S}_{u_i}} \prod_{j=U_{i-1}+1}^{U_i-1}((c_{j}q^j)^{\chi(\sigma_i(j-U_{i-1})>\sigma_j(j+1-U_{i-1})}+c_{\overline{j}}q^j)
\\
&= \prod_{i\geq 1} \frac{1+(c_{\overline{i}}q^i)^{\chi(i>U_{\ell-1})}}{1-c_{i}q^i}\\
&\quad\cdot \prod_{i=1}^{\ell-1} \overline{\E}_{u_i}(c_{\overline{U_{i-1}+1}}q^{U_{i-1}+1},\ldots,c_{\overline{U_{i}-1}}q^{U_{i}-1},c_{U_{i}-1}q^{U_{i}-1},c_{\overline{U_{i}}}q^{U_{i}},1).
\end{align*}
Therefore, when $\ell$ tends to $\infty$, we obtain \eqref{eq:ovpu}.
For \eqref{eq:pu}, in the above proof, we sum over the $(\Sigma,\la)\in \Omega_U$ such that $\la \in \Pp$. This is equivalent to saying that there are no over-lined parts, and the generating function then consists of that of $(\Sigma,\la)\in \Omega_U$ with $c_{\overline{i}}=0$ for all $i\geq 1$. Moreover, by Lemma \ref{lem:conjugate}, the map $\la\mapsto (\Sigma,\tilde{\la})$ induces a bijection between $\Pp(U)$ and the pairs of $\Omega_U$ with the second terms being a partition. Hence, \eqref{eq:pu} stands true. 
Finally, for \eqref{eq:ovdu}, we first observe that the  $\la\mapsto (\Sigma,\tilde{\la})$ induces a bijection between $\Pp(U)$ and the pairs of $\Omega_U$ such that the second terms is an over-partition $\mu$ with parts of size $U_i$ for all $0<\overline{U_i}\leq \mu_1$. 
For all $i\geq 1$, the generating function of such pairs satisfying $-u_i\leq \mu_1-U_i<0$ is given by the expression
\begin{align*}
&\frac{\overline{\E}_{u_i}(c_{\overline{U_{i-1}+1}}q^{U_{i-1}+1},c_{U_{i-1}+1}q^{U_{i-1}+1},\ldots,c_{\overline{U_{i}-1}}q^{U_{i}-1},c_{U_{i}-1}q^{U_{i}-1},0,1)}{(1-c_{U_{i-1}+1}q^{U_{i-1}+1})\cdots(1-c_{U_{i}-1}q^{U_{n}-1})}\\
&\qquad\cdot \prod_{j=1}^{i-1}\frac{\overline{\E}_{u_j}(c_{\overline{U_{j-1}+1}}q^{U_{j-1}+1},c_{U_{j-1}+1}q^{U_{j-1}+1},\ldots,c_{\overline{U_{j}}}q^{U_{j}},c_{U_{j}}q^{U_{j}})}{(1-c_{U_{j-1}+1}q^{U_{j-1}+1})\cdots(1-c_{U_{j}}q^{U_{j}})}.
\end{align*}
Summing over $i\geq 1$ yields to \eqref{eq:ovdu}.
\subsection{Proof of Theorem \ref{theo:pus}}\label{sec:pus}

Let $\la \in \overline{\Pp}(U)$ and $\Gamma_U(\la) = (\Sigma,\tilde{\tilde{\la}})$.
Since $u_{n_i}=1$ for all $i\geq 1$, we then have that $\sigma_{n_i}=e_{1}$ the unique element of $\mathcal{S}_1$, so that $\tilde{\tilde{\la}}_{s_i}=\la_{\Sigma(U_{n_i})}=\la_{U_{n_i}}$. Hence,
$$\sum_{\la\in \overline{\Pp}(U)}C(\la)q^{|\la|_S} = \sum_{\la\in \overline{\Pp}(U)}C(\la)q^{|\tilde{\tilde{\la}}|_S}
=\sum_{(\Sigma,\la)\in \Omega_U}(\prod_{\substack{i\geq 1\\ \la_i>0}}c_{\la_i})q^{\sum_{i\geq 1} S_{\la_i}}$$
This is obtained by replacing $c_iq^{i}$ and $c_{\overline{i}}q^i$ respectively by $c_iq^{S_i}$ and $c_{\overline{i}}q^{S_i}$ in the right-sum of \eqref{eq:ovpu}. Finally, $S_j=i-1$ for all $U_{n_{i-1}}\leq j<U_{n_i}$ and we deduce to \eqref{eq:ovpus}. Similarly, we deduce \eqref{eq:pus} and \eqref{eq:ovdus} respectively from \eqref{eq:pu} and \eqref{eq:ovdu}.

\end{document}